\font\tenmath=msbm10
\font\sevenmath=msbm7
\font\fivemath=msbm5
\renewcommand{\phi}{\varphi}
\renewcommand{\P}{{\rm I}\kern-0.18em{\rm P}}
\newcommand{\E}{{\rm I}\kern-0.18em{\rm E}}
\newcommand{\R}{{\rm I}\kern-0.18em{\rm R}}
\newcommand{\KL}{\mathrm{KL}}
\newcommand{\cN}{\mathcal{N}}
\newcommand{\cX}{\mathcal{X}}
\newcommand{\dsone}{\mathds{1}}
\renewcommand{\epsilon}{\varepsilon}
\newcommand{\wh}{\widehat}
\newcommand{\argmax}{\mathop{\mathrm{argmax}}}
\newlength{\minipagewidth}
\newcommand{\bookbox}[1]{
\par\medskip\noindent
\framebox[\textwidth]{
\begin{minipage}{\minipagewidth}
{#1}
\end{minipage} } \par\medskip }
\newcommand{\beq}{\begin{equation}}
\newcommand{\eeq}{\end{equation}}
\newcommand{\beqa}{\begin{eqnarray}}
\newcommand{\eeqa}{\end{eqnarray}}
\newcommand{\beqan}{\begin{eqnarray*}}
\newcommand{\eeqan}{\end{eqnarray*}}
\def\ba#1\ea{\begin{align*}#1\end{align*}} 
\def\banum#1\eanum{\begin{align}#1\end{align}} 
\newcommand{\bi}{^{(i)}}
\newcommand{\bpi}{^{(I_t)}}
\newcommand{\BlackBox}{\rule{1.5ex}{1.5ex}}
\renewenvironment{proof}{\par\noindent{\bfseries\upshape
  Proof\ }}{\hfill\BlackBox\\[2mm]}
\newtheorem{theorem}{Theorem}
\newtheorem{lemma}[theorem]{Lemma}
\begin{document}

\begin{frontmatter}

\title{Bounded regret in stochastic multi-armed bandits}
\runtitle{Bounded regret in stochastic bandits}

 \author{\fnms{S\'ebastien} \snm{Bubeck}\ead[label=e1]{sbubeck@princeton.edu}, \fnms{Vianney} \snm{Perchet}\corref{}\thanksref{t2}\ead[label=e2]{vianney.perchet@normalesup.org}\and \fnms{Philippe} \snm{Rigollet}\corref{}\thanksref{t3}\ead[label=e3]{rigollet@princeton.edu}}

\thankstext{t2}{Partially supported by the ANR (ANR-10-BLAN-0112).}
\thankstext{t3}{Partially supported by NSF grants DMS-0906424, CAREER-DMS-1053987 and a gift from the Bendheim Center for Finance.}

 \affiliation{Princeton University, Universit\'e Paris Diderot and Princeton University}

 \address{{S\'ebastien Bubeck}\\
{Department of Operations Research} \\
{ and Financial Engineering}\\
{Princeton University}\\
{Princeton, NJ 08544, USA}\\
 \printead{e1}}
\address{{Vianney Perchet}\\
{LPMA, UMR 7599}\\
{Universit\'e Paris Diderot}\\
{175, rue du Chevaleret}\\
 {75013 Paris, France}\\
 \printead{e2}}
 \address{{Philippe Rigollet}\\
{Department of Operations Research} \\
{ and Financial Engineering}\\
{Princeton University}\\
{Princeton, NJ 08544, USA}\\
 \printead{e3}}

%

\runauthor{Bubeck, Perchet and Rigollet}

\begin{abstract}
\ We study the stochastic multi-armed bandit problem when one knows the value $\mu^{(\star)}$ of an optimal arm, as a well as a positive lower bound on the smallest positive gap $\Delta$. We propose a new randomized policy that attains a regret {\em uniformly bounded over time} in this setting. We also prove several lower bounds, which show in particular that bounded regret is not possible if one only knows $\Delta$, and bounded regret of order $1/\Delta$ is not possible if one only knows $\mu^{(\star)}$.
\end{abstract}

\begin{keyword}[class=AMS]
\kwd[Primary ]{62L05}
\kwd[; secondary ]{68T05, 62C20}
\end{keyword}

\begin{keyword}
\kwd{Stochastic multi-armed bandits, Bounded Regret, Minimax optimality, Finite time analysis}
\end{keyword}

\end{frontmatter}

\section{Introduction}
In this paper we investigate the classical stochastic multi-armed bandit problem introduced by \cite{Rob52} and
described as follows: an agent facing $K$ actions (or bandit arms) selects one arm at every time step until a finite time horizon $n \ge 1$. Successive pulls of each arm $i \in \{1, \hdots, K\}$ yield a sequence of i.i.d rewards $Y\bi_1, Y\bi_2, \ldots$ according to some unknown distribution $\nu_i$ with expected value $\mu\bi$. Denote by $\star \in \{1,\ldots, K\}$ any optimal arm defined such that $\mu^{(\star)} = \max_{i =1, \ldots, K} \mu\bi$. A \emph{policy} $I=\{I_t\}$ is a sequence of random variables $I_t \in\{1, . . . , K \}$ indicating which arm to pull at each time $t=1, \ldots, n$ and such that $I_t$ depends only on observations strictly anterior to $t$. The performance of a policy $I$ is measured by its (cumulative) \emph{regret} at time $n$ that is defined by
$$
R_n = n \mu^{(\star)} - \sum_{t=1}^n \E\,\mu^{({I_t})}\,.
$$
Observe that if we denote by $T_i(t) = \sum_{\ell=1}^{t-1} \dsone\{I_\ell = i\}$ the number of times arm $i$ was pulled (strictly) before time $t \geq 2$ and by $\Delta_i = \mu^{(\star)} - \mu\bi$ the gap between arm $i$ and the optimal arm, then one can rewrite the regret as $R_n = \sum_{i=1}^K \Delta_i \E T_i(n+1)$. This formulation will be   used hereafter.

We refer the reader to \cite{BC12} for a survey of the extensive literature on this problem and its variations. In this paper we investigate a phenomenon that was first observed in \cite{LR84}:
with some prior knowledge 
(in the form of lower bounds) on the maximal mean $\mu^{(\star)}$ and the minimal gap $\Delta = \min_{i : \Delta_i > 0} \Delta_i$, it is possible to obtain a regret that is {\em bounded uniformly in $n$}, which implies in particular that the regret does not tend to infinity as the time horizon $n$ tends to infinity. Note that this result is striking, as the seminal paper \cite{LR85} indicates that, if one has no prior knowledge on the distributions, then asymptotically (in $n$) a regret of order $\log n$ is unavoidable.

\subsection{Contributions}
We describe in Section~\ref{sec:2armed} a simple algorithm for the two-armed bandit problem when one knows the largest expected reward $\mu^{(\star)}$ and the gap $\Delta$. In this two-armed case, this amounts to knowing $\mu^{(1)}$ and $\mu^{(2)}$ up to a permutation. We show that the regret of this algorithm is bounded by $\Delta + 16/\Delta$, uniformly in $n$.  The optimality of this bound is assessed in Section~\ref{sec:LB} where we show that any agent knowing $\Delta$ and $\mu^{(\star)}$ must incur a regret of at least $1/\Delta$. This upper and lower bounds raise the following question: can such bounded regret be achieved without one of these two pieces of information? It follows from Theorems~\ref{th:LB2} and~\ref{th:LB3} that the answer to this question is negative. Indeed, the sole knowledge of either $\Delta$ or $\mu^{(\star)}$ 
leads to a rescaled regret $\Delta R_n$ that is at least logarithmic in $n$. Interestingly, all these results are fully non-asymptotic, including lower bounds.

What if $\Delta$ is not perfectly known but only $\epsilon>0$ such that $\Delta>\epsilon$? We answer this question in Section~\ref{sec:general} in the context of the general $K$-armed bandit problem. There, we prove an upper bound on $R_n$ when one knows the maximal mean $\mu^{(\star)}$ together with a positive lower bound $\epsilon$ on the smallest gap $\Delta$. Specifically, we design a randomized policy for which
$$
R_n \leq \sum_{i: \Delta_i > 0} \Big\{ \Delta_i+\frac{32}{\Delta_i}\log\big(\frac{5}{\epsilon}\big) \Big\}\,.
$$ 
Moreover, it follows form our main lower bound in Theorem~\ref{th:LB3}  that this result cannot be improved without further assumptions, since for $\epsilon$ of order of $1/\sqrt{n}$ ---no information on the smallest gap--- a logarithmic growth in $n$ is unavoidable for the rescaled regret $\Delta R_n$. However for $\epsilon$ of order $\Delta$ one would expect no dependency on $\epsilon$ (since at least for $K=2$ our policy of Section \ref{sec:2armed} attains a regret of order $1/\Delta$). To deal with this issue we propose an improvement of the basic policy that for which the term $\log(1/\epsilon)$ is replaced by $\log(\Delta_i / \epsilon) \log\log \epsilon$. In particular if all the gaps $\Delta_i$ and $\epsilon$ are of the same order, the logarithmic becomes a log-log term.

The \emph{exploration-exploitation tradeoff} is a preponderant paradigm in the bandit literature. The effects of this tradeoff already appear for the case $K=2$ in the form of the $\log n$ term derived in the original \cite{LR85} paper. Indeed,  there 
exist simple classes of (two!) problems over which the regret is uniformly bounded with full information but cannot be bounded uniformly with bandit feedback, see Theorem~\ref{th:LB2}. Clearly, 
this tradeoff should become more and more apparent as the number of arms increases but this is not our main focus. Rather, the combination of our results sheds light on an interesting phenomenon:  the effects of the tradeoff vanish when both $\Delta$ and $\mu^{(\star)}$ are known but can be seen already when $K=2$ and either $\Delta$ or $\mu^{(\star)}$ is unknown.

\subsection{Related works}
The two-armed bandit problem when one knows the distributions of the arms up to a permutation was first investigated in \cite{LR84}. The authors observed that in that case, using a policy based on the sequential likelihood ratio test, one can obtain a regret uniformly bounded over $n$. Both upper and lower bounds were provided. This setting was generalized in \cite{LR84book}, where the authors considered the general multi-armed bandit problem when one knows a separating value $\gamma$ between the largest mean and the other means. In that case they proved the bounded regret property for a policy based on sequential likelihood ratio tests for $H_0\,:\, \mu>\gamma$ vs. $H_1\,:\, \mu < \gamma$ (assuming exponential distributions to compute the likelihoods). They also designed a more subtle strategy for the case when only $\mu^{(\star)}$ is known. In that case too they proved a bounded regret property. The main open problems left by these works are (i) to understand the limitations of bounded regret, and (ii) to characterize the exact dependence on the parameters in the regret (when bounded regret is achievable). In this paper we make progress on both questions.

Regarding the limitations of bounded regret, we prove three finite-time lower bounds, including a finite-time version of the seminal result of \cite{LR85}. Ideas similar to the ones we develop in Theorems~\ref{th:LB1} and~\ref{th:LB2} already appeared in \cite{KL00} but our results are fully non asymptotic with the exact dependence in the parameters involved. Theorem~\ref{th:LB3} is more innovative. It shows that a logarithmic growth for the rescaled regret $\Delta R_n$ is unavoidable even if one knows $\mu^{(\star)}$. The proof of this result goes beyond any previous lower bound for the stochastic multi-armed bandit problem, including \cite{LR84book, LR85}, since all of them required to distinguish problems with different values of $\mu^{(\star)}$ (such as the ones in Theorem~\ref{th:LB2} for example). As a consequence of this theorem, we can deduce that the policies with bounded regret derived in \cite{LR84book, AgrTenAna89} with only the knowledge of $\mu^{(\star)}$ must have a suboptimal dependency in $1/\Delta$.

The knowledge of $\mu^{(\star)}$ was also exploited in other works. For instance in \cite{SA11}, the authors showed that knowing $\mu^{(\star)}$ allows for policies with provably better concentration properties. Their policies are based on sequential likelihood ratio tests for $H_0\,:\, \mu=\mu^{(\star)}$ vs. $H_1\,:\, \mu < \mu^{(\star)}$ (assuming Gaussian distributions to compute the likelihoods). To some extent it was to be expected that the knowledge of $\mu^{(\star)}$ leads to an improved regret as it partially removes the need for exploration: if one arm has empirical performances close to $\mu^{(\star)}$, one can be confident that this is the best arm without worrying that it could be the best arm only because we have not yet explored enough the other options. 
However note that the problem turns out to be more subtle than the above simple argument and underlines the fact that one needs more than the knowledge of $\mu^{(\star)}$ in order to have a bounded regret with optimal scaling in $1/\Delta$.
Indeed, Theorem~\ref{th:LB3} implies that the sole knowledge of $\mu^{(\star)}$ does not warrant the bounded property for the rescaled regret $\Delta R_n$. 



\subsection{Basic assumptions} Throughout the paper, we assume that the distributions $\nu_i$ are sub-Gaussian that is $\int e^{\lambda(x-\mu)}\nu_i(dx)\le e^{\lambda^2/2}$ for all $\lambda \in \R$. Note that these include Gaussian distributions with variance less than $1$ and distributions
 supported on an interval of length less than $2$.

We denote  by $\wh{\mu}\bi_{s} = \frac{1}{s} \sum_{\ell=1}^s Y\bi_{\ell}$ the empirical mean of arm $i$ after $s$ pulls, for $s \geq 1$.
Together with a Chernoff bound, it is not hard to see that the sub-Gaussian assumption implies the following concentration inequality, valid for any $u>0$,
\begin{equation} \label{eq:hoe}
\P(\wh{\mu}\bi_{s} - \mu\bi > u) \leq \exp\left(- \frac{s u^2}{2}\right) .
\end{equation}
%
%
\section{The two-armed case} \label{sec:2armed}
In this section we investigate a toy example where $K=2$ and the agent knows exactly both $\mu^{(\star)}=0$ (without loss of generality) and $\Delta$. While somewhat simplistic this example offers a convenient framework to lay the main ideas to build policies with bounded regret.



\begin{figure}[h!]
\bookbox{
Initialization:  
\begin{itemize} \item[(0)] For rounds $t \in \{1,2\}$, select arm $I_t=t$.
\end{itemize}
For each round $t=3,4,\ldots$
\begin{itemize}
\item[(1)]
If $\wh{\mu}\bi_{T_i(t)} > - \Delta /2$ and $\wh{\mu}\bi_{T_i(t)} > \wh{\mu}^{(j)}_{T_j(t)}$ then select arm $i$, i.e., $I_t = i$. 
\item[(2)]
Otherwise select both arms, i.e., $I_t=1$ and $I_{t+1}=2$.
\end{itemize}}

\caption{\label{fig:alg1}
A policy with bounded regret for the two-armed bandit problem.}
\end{figure}

\begin{theorem} \label{th:alg1}
Policy~\ref{fig:alg1} has regret bounded as $R_n \leq \Delta + 16/\Delta$, uniformly in $n$.
\end{theorem}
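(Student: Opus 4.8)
The plan is to bound $R_n = \Delta\, \E T_2(n+1)$ (taking arm $1$ to be optimal, so $\mu^{(1)}=\mu^{(\star)}=0$ and $\mu^{(2)}=-\Delta$), since $R_n \le \Delta + \Delta\,\E[\text{extra pulls of arm }2]$ up to the two initialization rounds. The suboptimal arm $2$ gets pulled in three ways: the two initialization rounds; rounds where step (1) selects arm $2$ because $\wh\mu^{(2)}_{T_2(t)} > -\Delta/2$ and it beats arm $1$'s empirical mean; and rounds where step (2) fires and both arms are pulled. The key point is that step (2) is triggered precisely when the current leader's empirical mean has dropped below $-\Delta/2$ or the two empirical means have crossed — and when arm $1$ is the leader this is a rare event by the concentration inequality \eqref{eq:hoe}, while when arm $2$ is the leader, pulling it is actually on its way to being corrected.

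The main steps I would carry out: (i) First control $\sum_{t} \P(I_t = 2 \text{ via step (1)})$. If arm $2$ is selected in step (1) at time $t$, then $\wh\mu^{(2)}_{T_2(t)} > -\Delta/2$, i.e. $\wh\mu^{(2)}_s - \mu^{(2)} > \Delta/2$ for $s = T_2(t)$. By a union bound over the possible values $s = T_2(t)$ and \eqref{eq:hoe}, $\sum_{s\ge 1}\P(\wh\mu^{(2)}_s - \mu^{(2)} > \Delta/2) \le \sum_{s\ge 1} e^{-s\Delta^2/8}\le 8/\Delta^2$ (using $1-e^{-x}\ge x/2$ for small $x$, or just the geometric bound $e^{-\Delta^2/8}/(1-e^{-\Delta^2/8})$ and $1/(e^x-1)\le 1/x$), which contributes $\Delta\cdot 8/\Delta^2 = 8/\Delta$ to the regret. (ii) Next control the step-(2) rounds. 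Step (2) can be triggered either because the leader is arm $1$ and $\wh\mu^{(1)}_{T_1(t)} \le -\Delta/2$ (equivalently $\wh\mu^{(1)}_s - \mu^{(1)} \le -\Delta/2$, again summable to $\le 8/\Delta^2$ occurrences over all $s$), or because the leader is arm $2$ — but if the leader is arm $2$ then the previous time arm $2$ was pulled it was pulled via step (1) or via step (2), and in the step-(1) case we've already accounted for it. The honest bookkeeping is: every time step (2) fires, either arm $1$'s empirical mean is below $-\Delta/2$ (a fresh "bad deviation" event for some sample size $s = T_1(t)$, summable), or arm $2$ is currently the empirical leader with $\wh\mu^{(2)}_{T_2(t)} > \wh\mu^{(1)}_{T_1(t)}$; in the latter case I would charge this to a deviation event of arm $2$ (its empirical mean exceeds that of arm $1$, which with $\mu^{(1)}-\mu^{(2)}=\Delta$ forces at least one of the two means to have deviated by $\Delta/2$ from its true value). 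Summing all these geometric tails, each family of events occurs at most $O(1/\Delta^2)$ times in expectation, and step (2) pulls arm $2$ once per trigger, so the total expected number of suboptimal pulls is $O(1/\Delta^2)$, giving $R_n = O(1/\Delta)$; tracking the constants carefully yields the claimed $\Delta + 16/\Delta$.

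The main obstacle is the accounting in step (ii): one must be careful not to double-count, and in particular to argue that the events "step (2) fires at time $t$" can be injectively assigned to distinct pairs $(\text{arm}, \text{sample index } s)$ so that each such pair is used at most once and the relevant deviation $|\wh\mu^{(j)}_s - \mu^{(j)}| > \Delta/2$ holds. The cleanest route is to define, for each arm $j$, the set of "bad" sample sizes $B_j = \{ s \ge 1 : \wh\mu^{(j)}_s - \mu^{(j)} \le -\Delta/2 \ \text{or}\ \wh\mu^{(j)}_s-\mu^{(j)}\ge \Delta/2\}$ as appropriate, show $\E|B_j| \le \sum_{s\ge 1} e^{-s\Delta^2/8}$, and then prove the deterministic (pathwise) claim that every excess pull of arm $2$ beyond the two initialization rounds can be mapped to some $s \in B_1 \cup B_2$ with multiplicity bounded by a small constant. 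Once that combinatorial lemma is in place, the probabilistic part is just the geometric-series estimate above, and optimizing the constants gives the bound $R_n \le \Delta + 16/\Delta$.
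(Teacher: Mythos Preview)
Your decomposition into ``arm 2 pulled via step (1)'' versus ``arm 2 pulled via step (2)'' is exactly the paper's decomposition (the paper phrases it as $\wh\mu^{(2)}_{T_2(t)} > -\Delta/2$ versus $\wh\mu^{(2)}_{T_2(t)} \le -\Delta/2$, which is equivalent once you check that step (2) only fires when both empirical means are $\le -\Delta/2$). Your step (i) is correct and identical to the paper's treatment.

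Your step (ii), however, is badly overcomplicated, and the ``main obstacle'' you flag is self-imposed. Look again at when step (2) fires: step (1) selects the empirical leader whenever that leader's mean exceeds $-\Delta/2$, so step (2) fires at time $t$ if and only if \emph{both} empirical means are $\le -\Delta/2$. In particular $\wh\mu^{(1)}_{T_1(t)} \le -\Delta/2$ automatically --- there is no need for a case split on which arm is the leader, and no need to invoke a deviation of arm 2. (Your own case B, if you push it through, collapses: arm 2 being the leader while step (2) fires forces $\wh\mu^{(1)} \le \wh\mu^{(2)} \le -\Delta/2$, so you are back in case A.) Since step (2) plays arm 1 first, the values $T_1(\cdot)$ at successive firings are distinct, and the same geometric sum as in step (i) gives $\le 8/\Delta^2$ expected firings. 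This yields the constant $16$ immediately, with no injective-assignment lemma and no bookkeeping about double-counting.

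The paper's proof is exactly this: it shows the inclusion
\[
\{\wh\mu^{(2)}_{T_2(t)} \le -\Delta/2,\ t\ge 3,\ I_t = 2\} \subset \{\wh\mu^{(1)}_{T_1(t-1)} \le -\Delta/2,\ t\ge 3,\ I_{t-1} = 1\}
\]
in one line and then reuses the bound from step (i). So your approach is correct and ultimately the same as the paper's; you just missed the one-line simplification that makes the combinatorial lemma unnecessary.
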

\begin{proof}
Without loss of generality we assume that $1=\star$ is the optimal arm. Observe that
$$
\{I_t = 2\} \subset \{t=2\} \cup \{\wh{\mu}^{(2)}_{T_2(t)} > - \Delta /2 \,, t \geq 3, \, \ I_t = 2\} \cup \{\wh{\mu}^{(2)}_{T_2(t)} \leq - \Delta /2 \,, t \geq 3, \, \ I_t = 2\} .$$
Summing over $t$ for the 
second event, we get
\begin{equation} \label{eq:1}
\E \sum_{t=3}^n \dsone\{\wh{\mu}^{(2)}_{T_2(t)} > - \Delta /2 \,, \ I_t = 2\} \leq \E \sum_{t=1}^n \dsone \{\wh{\mu}^{(2)}_{t} > - \Delta /2\} \leq \sum_{t=1}^n \exp(- t \Delta^2 / 8) \leq \frac{8}{\Delta^2} .\end{equation}

For the third event we use the definition of the policy to obtain
$$\{\wh{\mu}^{(2)}_{T_2(t)} \leq - \Delta /2 \,, t \geq 3, \, \ I_t = 2\} \subset \{\wh{\mu}^{(1)}_{T_1(t-1)} \leq - \Delta /2 \,, t \geq 3, \, \ I_{t-1} = 1\}\, $$
and conclude as in~\eqref{eq:1}.
\end{proof}
This policy has two weaknesses. First one may pay a big price for misspecifying the value of~$\Delta$. Namely if one only knows a lower bound $0<\epsilon\le \Delta$ and substitutes $\epsilon$ to $\Delta$ in Policy~\ref{fig:alg1}, then it follows easily that 
the regret becomes of order $\Delta/\epsilon^2$. Furthermore, for essentially the same reason, the trivial generalization of this algorithm to the $K$-armed case would give a regret bounded by 
$\sum_i \Delta_i/\Delta^2$. In the next section we show how to overcome these two issues using a new, randomized, policy.

\section{A family of policies with bounded regret} \label{sec:general}
In this section we consider the general multi-armed case, when the agent knows $\mu^{(\star)}=0$ (without loss of generality) and an $\epsilon>0$ such that $\epsilon \leq \Delta$. Akin to Policy~\ref{fig:alg1}, the  policy analyzed here sets a threshold at $-\epsilon/2$ and prescribes to pull a single arm above this threshold. However if all arms have their empirical mean below this threshold, then the policy is more subtle than what was described in the previous section (where all arms were pulled in round robin fashion). Here the policy picks an arm at random, where the probability of selecting arm $i$ is essentially proportional to $(\hat\mu\bi_{T_i(t)})^{-2}$,
 which is an empirical estimate of $\Delta_i^{-2}$ since $\mu^{(\star)}=0$. Policy~\ref{fig:alg3} is slighly more general, as it uses a potential function $\psi:\R_+ \rightarrow \R_+$, and selects arm $i$ with probability inversely proportional to $\psi(|\wh{\mu}\bi_{T_i(t)}|)$. The natural choice is $\psi(x) = x^2$, but other choices can lead to improved performances, see Theorem \ref{th:algPsi} below. 
Note that we also analyze the case where $\epsilon = 0$ (that is, when we have no information on the smallest gap).


\begin{figure}[h!]
\bookbox{
Initialization:  
\begin{itemize} \item[(0)] For rounds $t \in \{1,\ldots,K\}$, select arm $I_t=t$.
\end{itemize}
For each round $t=K+1,K+2,\ldots$
\begin{itemize}
\item[(1)]
If there exists $i$ such that $\wh{\mu}\bi_{T_i(t)} \geq - \epsilon /2$, then select $I_t \in \argmax_{1 \leq i \leq K} \wh{\mu}\bi_{T_i(t)}$. 
\item[(2)]
Otherwise select randomly an arm according to the following probability distribution:
$$p_{i,t} = \frac{c}{\psi(|\wh{\mu}\bi_{T_i(t)}|)}, \ \text{where} \ c = \sum_{j=1}^K \frac{1}{\psi(|\wh{\mu}^{(j)}_{T_j(t)}|)} .$$
\end{itemize}}

\caption{\label{fig:alg3}
A family of policies with bounded regret for the $K$-armed bandit problem.}
\end{figure}

\begin{theorem}\label{th:algPsi}
Fix $\epsilon \in (0, 1 \wedge \Delta]$, then Policy~\ref{fig:alg3} associated with the potential $\psi(x)=x^2$ satisfies for all $n \geq 1$,
\begin{equation}
\label{eq:psisimp}
R_n \leq \sum_{i: \Delta_i > 0} \Big\{ \Delta_i+\frac{32}{\Delta_i}\log\big(\frac{5}{\epsilon}\big) \Big\}\,.
\end{equation}
Furthermore for $\epsilon = 0$, let $v = \E \left(Y_1^{(\star)}\right)^2$, then the regret is bounded as
\begin{equation}
\label{eq:psiepszero}
R_n \leq \sum_{i : \Delta_i >0}  \Big\{\Delta_i + (1 \vee v)\frac{4 \log(9 n)}{\Delta_i} \Big\}\,.
\end{equation}
The dependency in $\epsilon$ can be reduced by using the potential $\psi(x) = \frac{x^2}{\log(4 x/\epsilon)}$ since it yields
\begin{equation}\label{eq:psilog}
R_n \leq \sum_{i: \Delta_i > 0}\Big\{ \Delta_i+\frac{32\log\big(\frac{2\Delta_i}{\varepsilon}\big)}{\Delta_i}\big[3+   \log\log\big(\frac{4}{\epsilon}\big)\big]\Big\}\,.\end{equation}
\end{theorem}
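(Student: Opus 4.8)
The plan is to analyze Policy~\ref{fig:alg3} by splitting, for each suboptimal arm $i$, the pulls of arm $i$ into those made in step~(1) (one arm above the threshold $-\epsilon/2$) and those made in step~(2) (all empirical means below the threshold, random selection). For the step~(1) pulls we argue exactly as in Theorem~\ref{th:alg1}: on the event $\{I_t = i, \wh\mu\bi_{T_i(t)} \geq -\epsilon/2\}$ with $\Delta_i \geq \epsilon$, the empirical mean of arm $i$ has deviated above $\mu\bi$ by at least $\Delta_i - \epsilon/2 \geq \Delta_i/2$, so summing the concentration inequality~\eqref{eq:hoe} over the number of pulls of $i$ contributes at most a geometric series of order $1/\Delta_i^2$; for $\epsilon = 0$ one uses instead that step~(1) is only triggered when some empirical mean exceeds $-\epsilon/2 = 0 = \mu^{(\star)}$, which can be controlled uniformly in $n$ via a union bound over the optimal arm's empirical mean (this is where the factor $\log(9n)$ and the second-moment constant $v$ enter, since one must control $\E(\wh\mu^{(\star)}_s)^2$ or similar). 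I would not dwell on this part since it mirrors arguments already in the paper.

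The heart of the matter is the step~(2) contribution with the logarithmic potential $\psi(x) = x^2/\log(4x/\epsilon)$. The key idea is that when step~(2) fires, \emph{some} arm is pulled, and with the probabilities $p_{i,t} = c/\psi(|\wh\mu\bi_{T_i(t)}|)$ the chance of pulling a \emph{good} arm (the optimal one, whose empirical mean is close to $0$) is comparatively large, so a suboptimal arm $i$ is pulled at step~(2) only rarely relative to how often step~(2) occurs. I would set up a potential/amortization argument: define a stopping-type accounting where each visit to step~(2) is "charged" and show that $\sum_t p_{i,t}\dsone\{\text{step (2) at } t\}$ is bounded. The clean way is to note that on step~(2), all empirical means lie below $-\epsilon/2$, and in particular for the optimal arm this is an event of small probability that must be summed over its pull-count; so step~(2) can only be entered a bounded (in $n$) number of times after the optimal arm's empirical mean has concentrated — but crucially, since $\psi$ is increasing and the optimal arm typically has $|\wh\mu^{(\star)}| \approx 0$ we'd have $\psi$ small there and $c$ correspondingly controlled. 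The extra $\log(4x/\epsilon)$ in the denominator of $\psi$ means that when $|\wh\mu\bi_{T_i(t)}|$ is of order $\Delta_i$ (which it is, with high probability, once $i$ has been pulled enough), we have $\psi(|\wh\mu\bi|) \approx \Delta_i^2/\log(4\Delta_i/\epsilon)$, so $p_{i,t} \lesssim \log(4\Delta_i/\epsilon)/\Delta_i^2$ times $c$, and one optimizes the trade-off between how large $\psi$ can be made (reducing $p_{i,t}$) and how much the smaller denominator on the good arm inflates the normalizing sum $c$: this is exactly what replaces $\log(1/\epsilon)$ by $\log(\Delta_i/\epsilon)\log\log(1/\epsilon)$, the $\log\log$ coming from the iterated-logarithm-type bound needed to control the rare excursions of $\wh\mu^{(\star)}$ below $-\epsilon/2$ across all scales of $T_\star(t)$.

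More concretely, I would: (a) fix $i$ with $\Delta_i > 0$ and decompose $\E T_i(n+1) = 1 + \E\#\{t : I_t = i, \text{step (1)}\} + \E\#\{t : I_t = i, \text{step (2)}\}$; (b) bound the step~(1) term by $\sum_{s\geq 1}\exp(-s\Delta_i^2/8) \leq 8/\Delta_i^2$ exactly as in~\eqref{eq:1}; (c) for the step~(2) term, condition on the past and write $\E\#\{t: I_t=i,\text{step (2)}\} = \E\sum_t p_{i,t}\dsone\{\text{step (2) at }t\}$, then split on whether $\wh\mu\bi_{T_i(t)}$ is in the "good" range $[-2\Delta_i, -\Delta_i/2]$ or not: on the bad range concentration~\eqref{eq:hoe} gives a $1/\Delta_i^2$ tail summed over pulls of $i$; on the good range $p_{i,t} \leq \psi(\Delta_i/2)^{-1}\cdot 1 \cdot \dsone\{\text{step (2)}\}$ crudely (since $c \geq 1/\psi(|\wh\mu\bi|)$ always, we actually need $c \cdot \psi(|\wh\mu\bi|)^{-1} \leq$ the \emph{fraction} of step-(2) rounds, and here one bounds $\sum_t \dsone\{\text{step (2) at }t\}$ itself by controlling the optimal arm); (d) bound the total number of step~(2) rounds by a union bound: step~(2) requires $\wh\mu^{(\star)}_{T_\star(t)} < -\epsilon/2$, and a peeling argument over dyadic ranges of $T_\star(t)$ gives $\E\#\{\text{step (2) rounds}\} \lesssim \epsilon^{-2}$ for the $\psi = x^2$ case and a $\log\log(1/\epsilon)$-type bound when one exploits the shape of $\psi$ more carefully; (e) combine and simplify the constants to match~\eqref{eq:psisimp}, \eqref{eq:psiepszero}, \eqref{eq:psilog} respectively.

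The main obstacle is step~(d)/(c): correctly coupling the random selection probabilities with the (rare) event that the optimal arm's empirical mean dips below $-\epsilon/2$, and doing the peeling over the number of pulls of the optimal arm so that the final bound is genuinely \emph{uniform in $n$} (for $\epsilon > 0$) rather than merely $O(\log n)$. This requires care because $T_\star(t)$ is itself random and the rounds where step~(2) fires are not i.i.d.; the trick will be to bound $\sum_{t} \dsone\{\text{step (2) at }t\} \leq \sum_{s \geq 1} \dsone\{\wh\mu^{(\star)}_s < -\epsilon/2\}$ (each value of $T_\star$ giving at most one step-(2) round before the optimal arm is pulled again — here one must check the policy actually pulls $\star$ whenever $\wh\mu^{(\star)}$ recovers, using that step~(2) followed by a large positive excursion triggers step~(1)), and then take expectations. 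Getting the interplay between this counting bound and the potential-weighted selection probability to yield the precise $\log(2\Delta_i/\epsilon)[3 + \log\log(4/\epsilon)]$ factor, with honest constants, is the delicate accounting I expect to be the real work.
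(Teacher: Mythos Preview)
Your step-(1) treatment (part (b)) matches the paper, but the step-(2) analysis (parts (c)--(d)) has a real gap. The counting inequality you propose,
\[
\sum_{t} \dsone\{\text{step (2) at } t\} \;\leq\; \sum_{s \geq 1} \dsone\{\wh\mu^{(\star)}_s < -\epsilon/2\},
\]
justified by ``each value of $T_\star$ giving at most one step-(2) round before the optimal arm is pulled again,'' is false. In step~(2) the optimal arm is selected only with probability $p_{\star,t}<1$, so the policy can sit at the same value of $T_\star(t)$ for arbitrarily many consecutive step-(2) rounds. There is no deterministic bound on the number of step-(2) rounds in terms of the pull count of arm~$\star$, and your subsequent peeling in (d) rests on this.

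The missing device is a one-line ratio trick. On the event $\{\wh\mu^{(i)}_{T_i(t)}\le -\Delta_i/2,\ Z=2\}$ write
\[
p_{i,t} \;=\; \frac{p_{i,t}}{p_{\star,t}}\,p_{\star,t}
\;=\; \frac{\psi\big(|\wh\mu^{(\star)}_{T_\star(t)}|\big)}{\psi\big(|\wh\mu^{(i)}_{T_i(t)}|\big)}\,p_{\star,t}
\;\le\; \frac{\psi\big(|\wh\mu^{(\star)}_{T_\star(t)}|\big)}{\psi(\Delta_i/2)}\,p_{\star,t}.
\]
The factor $p_{\star,t}\,\dsone\{Z=2\}$ is exactly $\P(I_t=\star,\,Z=2\mid\text{history})$, so after taking expectations and summing over $t$ one may legitimately reindex by the pull count of arm~$\star$: each value of $T_\star$ now contributes \emph{once}, because the indicator $\dsone\{I_t=\star\}$ is what increments $T_\star$. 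This yields
\[
\sum_t \E\big[p_{i,t}\,\dsone\{\wh\mu^{(i)}_{T_i(t)}\le -\Delta_i/2,\ Z=2\}\big]
\;\le\; \frac{1}{\psi(\Delta_i/2)}\sum_{s\ge 1}\E\Big[\psi\big(|\wh\mu^{(\star)}_s|\big)\,\dsone\{\wh\mu^{(\star)}_s<-\epsilon/2\}\Big],
\]
a fixed-index tail sum that the paper then evaluates for each $\psi$ (giving the $\log(1/\epsilon)$ for $\psi(x)=x^2$ and the $\log\log(1/\epsilon)$ for $\psi(x)=x^2/\log(4x/\epsilon)$). No good/bad-range split on arm $i$ and no separate bound on the number of step-(2) rounds are needed; the $p_{\star,t}$ factor does the bookkeeping you were trying to do by hand.

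A smaller misattribution: for $\epsilon=0$, the constants $v$ and $\log n$ arise from the step-(2) term, not step-(1). Step-(1) still contributes $8/\Delta_i^2$ exactly as in \eqref{eq:1}; it is the step-(2) sum $\sum_{s=1}^n \E\,\psi(|\wh\mu^{(\star)}_s|)=\sum_{s=1}^n v/s$ (with $\psi(x)=x^2$) that produces the $v\log n$.
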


If $\varepsilon$ is of the order of every $\Delta_i$, then Equation \eqref{eq:psilog}  upper bounds the regret in $\sum_i \log \log(1/\Delta_i)/\Delta_i$; on the other hand, using the potential $\psi(x)=x^2$ only guarantees, under the same assumptions, a bound in $\sum_i  \log(1/\Delta_i)/\Delta_i$.

The result for $\epsilon=0$ implies that when one has no information on the smallest gap, our policy does not obtain bounded regret but it recovers the performances of UCB, \cite{ACF02}. As we shall see in Section \ref{sec:LB} it is in fact impossible to obtain bounded regret scaling in $1/\Delta$ if one only knows $\mu^{(\star)}$. 

Theorem \ref{th:algPsi} is deduced from the following more general regret bound for Policy~\ref{fig:alg3} expressed in terms of the properties of the potential $\psi$. 

\begin{theorem}\label{th:alg3}
Fix $\epsilon \in [0, \Delta]$ and let $\psi$ be a differentiable and increasing function $\psi: [\epsilon/2, +\infty) \rightarrow \R^+$. If $\epsilon > 0$, Policy~\ref{fig:alg3} satisfies for all $n \geq 1$,
\begin{equation}
\label{EQ:mainUB}
R_n \leq \sum_{i: \Delta_i > 0} \Big\{\Delta_i + \frac{8}{\Delta_i} + \frac{\Delta_i}{\psi(\Delta_i/2)}\Big[ \frac{8\psi(\epsilon/2)}{\epsilon^2}+ \int_{\epsilon/2}^{+\infty} \frac{2\psi'(x)}{e^{\frac{x^2}{2}}-1} dx\Big]\Big\}\,.
\end{equation}
Furthermore for $\epsilon = 0$ it satisfies
\begin{equation}
\label{eq:psiepszero0}
R_n \leq \sum_{i: \Delta_i > 0} \Big(\Delta_i + \frac{8}{\Delta_i} + \frac{\Delta_i}{\psi(\Delta_i/2)} \sum_{t=1}^n \E \ \psi(|\wh{\mu}^{(1)}_{t}|) \Big).
\end{equation}
\end{theorem}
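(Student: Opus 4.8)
\noindent\emph{Proof sketch (proposal).}
The plan is to bound $\E T_i(n+1)$ for each suboptimal arm $i$ and combine via $R_n=\sum_{i:\Delta_i>0}\Delta_i\,\E T_i(n+1)$. Write $c_t:=\sum_{j=1}^K\psi(|\wh{\mu}^{(j)}_{T_j(t)}|)^{-1}$ for the normalizing constant of step~(2), so that on a round where step~(2) is executed $p_{j,t}=1/(c_t\,\psi(|\wh{\mu}^{(j)}_{T_j(t)}|))$, and let $\cE$ denote the (random) set of rounds $t>K$ at which step~(2) is executed. Assume w.l.o.g.\ that arm $1$ is optimal, so $\mu^{(1)}=0$. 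Apart from its single initialization pull, each pull of $i$ at a round $t>K$ falls into exactly one of two categories: either $\wh{\mu}^{(i)}_{T_i(t)}\ge-\Delta_i/2$, or $t\in\cE$ and $\wh{\mu}^{(i)}_{T_i(t)}<-\Delta_i/2$. Indeed every exploitation pull of $i$ satisfies $\wh{\mu}^{(i)}_{T_i(t)}=\max_j\wh{\mu}^{(j)}_{T_j(t)}\ge-\epsilon/2\ge-\Delta_i/2$ because $\epsilon\le\Delta\le\Delta_i$. For the first category, a pull of $i$ forces $\wh{\mu}^{(i)}_{T_i(t)}-\mu^{(i)}\ge\Delta_i/2$, and distinct such pulls correspond to distinct values $s=T_i(t)\ge1$, so by \eqref{eq:hoe}
\[
\E\sum_{t>K}\dsone\{I_t=i,\ \wh{\mu}^{(i)}_{T_i(t)}\ge-\Delta_i/2\}\ \le\ \sum_{s\ge1}\P\!\big(\wh{\mu}^{(i)}_s-\mu^{(i)}\ge\Delta_i/2\big)\ \le\ \sum_{s\ge1}e^{-s\Delta_i^2/8}\ \le\ \frac{8}{\Delta_i^2},
\]
which contributes the $8/\Delta_i$ term after multiplying by $\Delta_i$.

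For the second category, on $t\in\cE$ with $\wh{\mu}^{(i)}_{T_i(t)}<-\Delta_i/2$ we have $|\wh{\mu}^{(i)}_{T_i(t)}|>\Delta_i/2\ge\epsilon/2$, hence $\psi(|\wh{\mu}^{(i)}_{T_i(t)}|)\ge\psi(\Delta_i/2)$ (monotonicity) and $p_{i,t}\le 1/(c_t\psi(\Delta_i/2))$; taking $\cF_t$-conditional expectations and summing,
\[
\E\sum_{t>K}\dsone\{I_t=i,\ t\in\cE,\ \wh{\mu}^{(i)}_{T_i(t)}<-\Delta_i/2\}\ \le\ \frac{1}{\psi(\Delta_i/2)}\,\E\sum_{t\in\cE}\frac1{c_t}\ =:\ \frac{S}{\psi(\Delta_i/2)}.
\]
The key observation is the exact identity $p_{1,t}\,\psi(|\wh{\mu}^{(1)}_{T_1(t)}|)=1/c_t$ on each $t\in\cE$, which gives $\E[\dsone\{t\in\cE,I_t=1\}\psi(|\wh{\mu}^{(1)}_{T_1(t)}|)\mid\cF_t]=\dsone\{t\in\cE\}/c_t$, and hence by the tower property $S=\E\sum_{t\in\cE,\,I_t=1}\psi(|\wh{\mu}^{(1)}_{T_1(t)}|)$: only rounds at which arm $1$ is \emph{actually} pulled contribute. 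Those have pairwise distinct $s=T_1(t)\ge1$, and being exploration rounds they satisfy $\wh{\mu}^{(1)}_{T_1(t)}<-\epsilon/2$, so $S\le\sum_{s\ge1}\E[\psi(|\wh{\mu}^{(1)}_s|)\dsone\{\wh{\mu}^{(1)}_s<-\epsilon/2\}]$.

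To finish, I would expand $\psi(|\wh{\mu}^{(1)}_s|)\dsone\{\wh{\mu}^{(1)}_s<-\epsilon/2\}=\psi(\epsilon/2)\dsone\{\wh{\mu}^{(1)}_s<-\epsilon/2\}+\int_{\epsilon/2}^{\infty}\psi'(x)\dsone\{\wh{\mu}^{(1)}_s<-x\}\,dx$ (fundamental theorem of calculus; $\psi$ differentiable and increasing), take expectations (Tonelli), use the sub-Gaussian tail $\P(\wh{\mu}^{(1)}_s<-u)\le e^{-su^2/2}$ (from \eqref{eq:hoe} applied with both signs of $\lambda$), and sum the geometric series in $s$, obtaining $S\le\frac{\psi(\epsilon/2)}{e^{\epsilon^2/8}-1}+\int_{\epsilon/2}^{\infty}\frac{\psi'(x)}{e^{x^2/2}-1}\,dx\le\frac{8\psi(\epsilon/2)}{\epsilon^2}+\int_{\epsilon/2}^{\infty}\frac{2\psi'(x)}{e^{x^2/2}-1}\,dx$ (using $1/(e^y-1)\le 1/y$; the factor $2$ is in fact not needed, but absorbing it is harmless). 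Assembling $\E T_i(n+1)\le 1+8/\Delta_i^2+S/\psi(\Delta_i/2)$, multiplying by $\Delta_i$ and summing over suboptimal $i$ gives \eqref{EQ:mainUB}. For $\epsilon=0$ the argument is identical up to the last step: the indices $s=T_1(t)$ of exploration pulls of arm $1$ satisfy $s\le n-1$, so the sum over $s$ is truncated at $n$ and one bounds $S\le\sum_{s=1}^{n}\E\psi(|\wh{\mu}^{(1)}_s|)$, which yields \eqref{eq:psiepszero0}.

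The large-deviation bookkeeping is routine. The only genuinely delicate point — and the only place the precise form of the randomization enters — is passing from $S=\E\sum_{t\in\cE}1/c_t$ to a sum over \emph{distinct} sample sizes of the optimal arm. The naive bound $1/c_t\le\psi(|\wh{\mu}^{(1)}_{T_1(t)}|)$ would leave us summing $\psi(|\wh{\mu}^{(1)}_{T_1(t)}|)$ over \emph{all} exploration rounds, and the same (possibly large) value $\psi(|\wh{\mu}^{(1)}_s|)$ would then be counted once per exploration round in the block between the $s$-th and $(s{+}1)$-th pull of arm $1$, a block length with no uniform control; the importance-weighting identity $p_{1,t}\,\psi(|\wh{\mu}^{(1)}_{T_1(t)}|)=1/c_t$ collapses this to one term per value of $s$ and is exactly what makes the $\log(1/\epsilon)$ bound possible. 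Minor care is also needed with the ordering conditions $\epsilon\le\Delta\le\Delta_i$ (so that $\Delta_i/2\ge\epsilon/2$ lies in the domain of $\psi$ and $-\epsilon/2\ge-\Delta_i/2$), with $T_1(t)\ge1$ for $t>K$, and with strict versus non-strict inequalities at the threshold.
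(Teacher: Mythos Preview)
Your argument is correct and follows the same route as the paper: the same three-way split of $\{I_t=i\}$, the same Hoeffding sum for the ``good-mean'' case, and the same ratio/importance-weighting trick $p_{i,t}=(p_{i,t}/p_{1,t})\,p_{1,t}$ to reduce the exploration contribution to a sum over pulls of the optimal arm, followed by the tail-integral computation. Your exposition of the key step is in fact cleaner than the paper's: the paper writes the chain
\[
\E\,p_{i,t}\dsone\{\cdot\}\ \le\ \frac{1}{\psi(\Delta_i/2)}\,\E\,\psi(|\wh\mu^{(1)}_{T_1(t)}|)\,p_{1,t}\,\dsone\{Z=2\}\ \le\ \frac{1}{\psi(\Delta_i/2)}\,\E\,\psi(|\wh\mu^{(1)}_{T_1(t)}|)\,\dsone\{\wh\mu^{(1)}_{T_1(t)}<-\epsilon/2\}
\]
and then invokes a ``simple rewriting of time'' to pass from $T_1(t)$ to $t$; as written, the last inequality drops $p_{1,t}$, which would make the time-rewriting invalid (the same value of $T_1(t)$ could be repeated across many rounds). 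What is really meant --- and what you state explicitly --- is that $p_{1,t}\dsone\{Z=2\}$ is replaced, via the tower property, by $\dsone\{I_t=1,\,Z=2\}$, so that distinct contributing rounds have distinct $s=T_1(t)$ and the reindexing is legitimate. Your observation that the one-sided tail $\P(\wh\mu^{(1)}_s<-x)$ suffices (hence the factor $2$ in \eqref{EQ:mainUB} is not needed) is also correct; the paper uses the two-sided bound $\P(|\wh\mu^{(1)}_t|\ge x)\le 2e^{-tx^2/2}$, which is where that factor comes from.
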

\begin{proof}
Without loss of generality we assume that $1=\star$ is the optimal arm. We decompose the event of a wrong selection into three events:
\begin{align*}
\{I_t = i\} \subset  \{t=i\}&\cup\{\wh{\mu}\bi_{T_i(t)} > - \Delta_i /2 \,, \, t \geq K+1,\, \ I_t = i\}\\
& \cup \{\wh{\mu}\bi_{T_i(t)} \leq - \Delta_i /2 \,, \, t \geq K+1,\, \ I_t = i\}\,.
\end{align*}

Using \eqref{eq:1} one can easily prove that the cumulative probability of the first 
two events is smaller than $1+8/\Delta_i^2$. For the 
third event, it is convenient to define the random variable $Z \in \{0,1,2\}$ that indicates whether the agent plays according to (0), (1) or (2) in Policy~\ref{fig:alg3}. We write the following, using the definition of the algorithm and the fact that $\psi$ is non-decreasing,
\begin{align*}
& \P \{\wh{\mu}\bi_{T_i(t)} \leq - \Delta_i /2 \,, \, t \geq K+1,\, \ I_t = i\}  =  \P \{\wh{\mu}\bi_{T_i(t)} \leq - \Delta_i /2 \,, \ I_{t} = i \,, Z=2\} \\
& =  \E \ p_{i,t} \dsone \{\wh{\mu}\bi_{T_i(t)} \leq - \Delta_i /2 \,, Z=2\}   =  \E \ \frac{p_{i,t}}{p_{1,t}} p_{1,t} \dsone \{\wh{\mu}\bi_{T_i(t)} \leq - \Delta_i /2 \,, Z=2\}  \\ 
& \leq  \E \ \frac{\psi(|\wh{\mu}^{(1)}_{T_1(t)}|)}{\psi(\Delta_i/2)} p_{1,t} \dsone \{\wh{\mu}\bi_{T_i(t)} \leq - \Delta_i /2 \,, Z=2\}   \leq  \frac{1}{\psi(\Delta_i/2)} \ \E \ \psi(|\wh{\mu}^{(1)}_{T_1(t)}|) p_{1,t} \dsone \{Z=2\}  \\
& \le  \frac{1}{\psi(\Delta_i/2)} \ \E \ \psi(|\wh{\mu}^{(1)}_{T_1(t)}|)  \dsone \{\wh{\mu}^{(1)}_{T_1(t)} <- \epsilon /2 \,, \, t \geq K+1\}  .\\
\end{align*}
A simple rewriting 
of time then concludes the proof for the case of $\epsilon = 0$. We use the slight abuse of notation $\psi^{-2}(x):=[\psi^{1}(x)]^2$, and $\psi(\infty) = \lim_{x \to +\infty} \psi(x)$. For $\epsilon > 0$ we have

\begin{align*}
&\sum_{t=1}^n \E \ \psi(|\wh{\mu}^{(1)}_{T_1(t)}|) \dsone\{\wh{\mu}^{(1)}_{T_1(t)} \leq - \epsilon /2 \} \leq  \sum_{t=1}^n \E \ \psi(|\wh{\mu}^{(1)}_{t}|) \dsone\{\wh{\mu}^{(1)}_{t} \leq - \epsilon /2\} \nonumber\\
& = \sum_{t=1}^n \int_0^{+\infty} \P \left( \psi(|\wh{\mu}^{(1)}_{t}|) \dsone\{\wh{\mu}^{(1)}_{t} \leq - \epsilon /2\} \geq x \right) dx \nonumber\\ 
& =  \sum_{t=1}^n \Big\{\psi\left(\frac{\epsilon}{2}\right)\P\big(|\wh{\mu}^{(1)}_{t}|>\frac{\epsilon}{2}\big)+\int_{\psi(\epsilon/2)}^{\psi(\infty)} \P(\psi(|\wh{\mu}^{(1)}_{t}|) \geq x) dx\Big\} \nonumber\\
& \leq  \sum_{t=1}^n \Big\{ \psi\left(\frac{\epsilon}{2}\right)e^{- \frac{t\epsilon^2}{8}} +\int_{\psi(\epsilon/2)}^{\psi(\infty)} 2 e^{- \frac{t\psi^{-2}(x)}{2} }dx \Big\}\\
&\le \frac{8}{\epsilon^2}\psi\left(\frac{\epsilon}{2}\right)+ \int_{\psi(\epsilon/2)}^{\psi(\infty)} \frac{2}{e^{\frac{\psi^{-2}(x)}{2}}-1} dx.\nonumber 
\end{align*}
Making the change of variable $x=\psi(u)$ concludes the proof of Theorem~\ref{th:alg3}.
\end{proof}


Theorem \ref{th:algPsi} follows from Theorem~\ref{th:alg3} with specific choices for $\psi$.
First, take $\psi(x) = x^2$, $\epsilon \in (0, 1]$ and observe that the integral in~\eqref{EQ:mainUB} can be computed as
$$
\int_{\epsilon/2}^{+\infty} \frac{4x}{e^{\frac{x^2}{2}}-1} dx=-4\log\big(1-e^{-\frac{\epsilon^2}{8}}\big)\le 8\log\Big(\frac{3}{\epsilon}\Big)\,,
$$
which gives~\eqref{eq:psisimp}. When $\epsilon = 0$, since $\E \ \psi(|\wh{\mu}^{(1)}_{t}|)= v/t$, Equation \eqref{eq:psiepszero0}
directly gives \eqref{eq:psiepszero}.

%
%



Next, we turn to the the slightly more sophisticated potential function $\psi(x) = \frac{x^2}{\log(4 x/\epsilon)}$. Observe that for any $x \ge 0$,
$$
\psi'(x)=\frac{2x}{\log(4x/\epsilon)}-\frac{x}{\log^2(4x/\epsilon)}\le \frac{2x}{\log(4x/\epsilon)}\,.
$$
Therefore, for $\epsilon \in (0,1]$, the integral in~\eqref{EQ:mainUB} is bounded from above by
\begin{align*}
 \int_{\epsilon/2}^{+\infty} \frac{4x}{\log(4x/\epsilon)[e^{\frac{x^2}{2}}-1]} dx&\le  \int_{\epsilon/2}^{1} \frac{8}{x\log(4x/\epsilon)} dx+ \int_{1}^{\infty} 9e^{-\frac{x^2}{2}} dx\\
 &\le 8\log\log(4/\epsilon)-8\log\log 2+4\\
 &\le 8\log\log(4/\epsilon)+7\,.
\end{align*}
It concludes the proof of~\eqref{eq:psilog}.

\section{Lower bounds} \label{sec:LB}
We conclude our study of bounded regret in stochastic multi-armed bandits with three different lower bounds. For simplicity, we phrase these results for the simple two-armed case. First we show with Theorem \ref{th:LB1} that if one knows both $\mu^{(\star)}$ and $\Delta$, then the best attainable regret is of order $1/\Delta$, which matches (up to a numerical constant) the result of Theorem \ref{th:alg1}. Next we show in Theorem \ref{th:LB2} that the sole knowledge of $\Delta$ leads to a lower bound of order $\log(n \Delta^2) / \Delta$. This theorem implies that the bounds of \cite{AB09}, \cite{AueOrt10} and \cite{PerRig11} exhibit a tight dependence in $\Delta$ (for the two-armed case), unlike the famous result of \cite{LR85}. 
Moreover, compared to the proof of \cite{LR85}, our approach is (i) much simpler, (ii) non-asymptotic
and (iii) it is not limited to a certain class of policies. 
Finally we show in Theorem~\ref{th:LB3} that if one only knows $\mu^{(\star)}$ then a regret of order $\frac{\log(n)}{\Delta}$ is unavoidable (for some value of $\Delta$).
\newline

Our proof strategy consists in rephrasing arm selection as a hypothesis testing problem, and then use well-known lower bounding techniques for the minimax risk of hypothesis testing. For instance, the proof of Theorem \ref{th:LB1} and Theorem \ref{th:LB2} builds upon the following result; see \cite[Chaper 2]{Tsy09} for a proof, or Lemma \ref{lem:2} below with $\lambda$ chosen to be a Dirac mass at $1$. 
Recall that the Kullback-Leibler divergence between two positive measures $\rho, \rho'$ with $\rho'$ absolutely continuous with respect to $\rho$, is defined as
\begin{equation*}
\KL(\rho,\rho') = \int \log\left(\frac{d\rho}{d\rho'}\right) d\rho = 
   \E_{X \sim \rho} \log\left(\frac{d\rho}{d\rho'}(X)\right)\,.
\end{equation*}

\begin{lemma} \label{lem:1}
Let $\rho_0, \rho_1$ be two probability distributions supported on some set $\cX$, with $\rho_1$ absolutely continuous with respect to $\rho_0$. Then for any measurable function $\psi : \cX \rightarrow \{0,1\}$, one has
$$\P_{X \sim \rho_0}(\psi(X) = 1) + \P_{X \sim \rho_1}(\psi(X) = 0) \geq \frac12 \exp\left(- \KL(\rho_0, \rho_1) \right).$$
\end{lemma}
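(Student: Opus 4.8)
The plan is to recognize this as a form of the Bretagnolle--Huber inequality and prove it directly by combining a pointwise minorization, the Cauchy--Schwarz inequality, and Jensen's inequality. Set $A = \{x \in \cX : \psi(x) = 1\}$, so that the left-hand side is exactly $\rho_0(A) + \rho_1(A^c)$, and the infimum over tests $\psi$ is the infimum over measurable sets $A$. If $\KL(\rho_0,\rho_1) = +\infty$ there is nothing to prove, so I may assume it is finite; together with the standing hypothesis $\rho_1 \ll \rho_0$ this makes $\rho_0$ and $\rho_1$ mutually absolutely continuous, and I let $p = d\rho_1/d\rho_0$, which is strictly positive $\rho_0$-almost surely (otherwise $\KL(\rho_0,\rho_1)=+\infty$ again).

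The first step is the elementary pointwise bound $\dsone_A + p\,\dsone_{A^c} \geq \min(1,p)$, valid coordinatewise since on $A$ the left side is $1 \geq \min(1,p)$ and on $A^c$ it is $p \geq \min(1,p)$. Integrating against $\rho_0$ gives
$$
\rho_0(A) + \rho_1(A^c) \;=\; \rho_0(A) + \int_{A^c} p \, d\rho_0 \;\geq\; \int_{\cX} \min(1,p) \, d\rho_0 .
$$
The second step exploits the factorization $\min(1,p)\cdot\max(1,p) = p$ and the Cauchy--Schwarz inequality:
$$
\Big( \int \sqrt{p}\, d\rho_0 \Big)^2 \;=\; \Big( \int \sqrt{\min(1,p)}\,\sqrt{\max(1,p)}\, d\rho_0 \Big)^2 \;\leq\; \int \min(1,p)\, d\rho_0 \cdot \int \max(1,p)\, d\rho_0 .
$$
Since $\max(1,p) \leq 1 + p$, the last factor is at most $\int (1+p)\, d\rho_0 = 2$, so $\int \min(1,p)\, d\rho_0 \geq \tfrac12 \big( \int \sqrt{p}\, d\rho_0 \big)^2$.

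The final step bounds the affinity $\int \sqrt{p}\, d\rho_0$ from below by Jensen's inequality applied to the convex function $e^{(\cdot)}$: writing $\sqrt{p} = \exp(\tfrac12 \log p)$,
$$
\int \sqrt{p}\, d\rho_0 \;=\; \E_{\rho_0}\!\big[ e^{\frac12 \log p} \big] \;\geq\; \exp\!\Big( \tfrac12 \E_{\rho_0}[\log p] \Big) \;=\; \exp\!\Big( -\tfrac12 \KL(\rho_0,\rho_1) \Big),
$$
where the last identity uses $\E_{\rho_0}[\log p] = \int \log(d\rho_1/d\rho_0)\, d\rho_0 = -\KL(\rho_0,\rho_1)$. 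Chaining the three displays yields $\rho_0(A) + \rho_1(A^c) \geq \tfrac12 \exp(-\KL(\rho_0,\rho_1))$, which is the claim. I do not expect a serious obstacle here; the only nonroutine ingredient is the $\min/\max$ factorization trick feeding Cauchy--Schwarz, and the mild measure-theoretic bookkeeping needed to justify dismissing the $\KL = +\infty$ case and to ensure $p>0$ almost surely. (Alternatively, one could quote Bretagnolle--Huber, $\|\rho_0-\rho_1\|_{\mathrm{TV}} \leq \sqrt{1 - e^{-\KL(\rho_0,\rho_1)}}$, together with $\inf_A[\rho_0(A)+\rho_1(A^c)] = 1 - \|\rho_0-\rho_1\|_{\mathrm{TV}}$ and the elementary inequality $1 - \sqrt{1-t} \geq t/2$ for $t \in [0,1]$, but the self-contained argument above is shorter.)
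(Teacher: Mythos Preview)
Your proof is correct and follows essentially the same route as the paper's. The paper does not prove Lemma~\ref{lem:1} directly (it cites \cite{Tsy09}), but its appendix proves the more general Lemma~\ref{lem:2} by exactly the same three ingredients you use: reducing to $\int \min(d\rho_0,d\bar\rho)$, applying Cauchy--Schwarz with the $\min\cdot\max$ factorization (bounding the $\max$ integral by $C_\lambda$, which is $2$ here), and then Jensen to pass from the Hellinger affinity to $\exp(-\KL)$.
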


In this section we denote by $\nu=\nu_1 \otimes \nu_2$ the product distribution that generates the rewards from $\nu_j$ when pulling arm $j \in \{1,2\}$. The regret of a policy that 
observes such rewards is denoted by $R_n(\nu)$. Finally let $\P_{\nu}$ denote the probability associated to $\nu$ and by $\E_{\nu}$ the corresponding expectation.

Hereafter, we favor rewards that are normally 
distributed because they lead to simpler calculations of the KL-divergence. However, our lower bounds remain of the same order for all families of distributions $\{\rho_\mu\}_\mu$ with expected value $\mu$ and such that $\KL(\rho_\mu-\rho_{\mu'})\ge C(\mu-\mu')^2$ for some absolute constant $C>0$. This is the case, for example, of the Bernoulli distribution with parameter $\mu$ as long as $\mu$ remains bounded away from 0 and 1; see, e.g., \cite[Lemma~4.1]{RigZee10}.

The first lower bound illustrates that when one knows the distributions up to a permutation, the best one can hope for is a bounded regret of order $1/\Delta$.
\begin{theorem} \label{th:LB1}
Let $\nu= \cN(0,1) \otimes \cN(- \Delta,1)$ and $\nu' = \cN(-\Delta,1) \otimes \cN(0,1)$. Then for any policy, and 
for every $n \geq 1$,
$$\max\left(R_n(\nu), R_n(\nu')\right) \ge \frac{1}{4\Delta}\,.$$
\end{theorem}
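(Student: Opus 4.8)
The plan is to reduce the problem to a binary hypothesis test between $\nu$ and $\nu'$ and then apply Lemma~\ref{lem:1}. The two environments $\nu$ and $\nu'$ are ``swaps'' of each other: in $\nu$, arm $1$ is optimal (mean $0$) and arm $2$ is suboptimal (mean $-\Delta$), while in $\nu'$ the roles are reversed. Since $R_n(\nu) = \Delta\,\E_\nu T_2(n+1)$ and $R_n(\nu') = \Delta\,\E_{\nu'} T_1(n+1) = \Delta\bigl(n - \E_{\nu'} T_2(n+1)\bigr)$, a policy with small regret under both must, under $\nu$, rarely pull arm $2$, yet under $\nu'$, pull arm $2$ almost always — i.e. it must be able to tell $\nu$ from $\nu'$ from the observed rewards.

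First I would fix a policy and define the random variable $S = T_2(n+1) \in \{0,1,\dots,n\}$, the number of pulls of arm $2$ in $n$ rounds, together with the full vector of rewards actually observed. I would work on the canonical probability space of the $2n$ potential reward variables $(Y^{(1)}_1,\dots,Y^{(1)}_n,Y^{(2)}_1,\dots,Y^{(2)}_n)$ so that the policy's behaviour is a deterministic (or independently randomized) function of these; then $\P_\nu$ and $\P_{\nu'}$ are two product Gaussian measures on $\R^{2n}$ differing only in which coordinates have mean $0$ versus $-\Delta$. Applying Lemma~\ref{lem:1} with the test $\psi = \dsone\{S \le n/2\}$ gives
$$
\P_\nu(S > n/2) + \P_{\nu'}(S \le n/2) \ge \tfrac12 \exp\bigl(-\KL(\P_\nu,\P_{\nu'})\bigr).
$$
By Markov's inequality, $\P_\nu(S > n/2) \le \tfrac{2}{n}\E_\nu S = \tfrac{2 R_n(\nu)}{n\Delta}$, and symmetrically $\P_{\nu'}(S \le n/2) = \P_{\nu'}(n - S \ge n/2) \le \tfrac{2}{n}\E_{\nu'}(n-S) = \tfrac{2 R_n(\nu')}{n\Delta}$. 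Hence $\tfrac{4}{n\Delta}\max(R_n(\nu),R_n(\nu')) \ge \tfrac12 \exp(-\KL(\P_\nu,\P_{\nu'}))$, so it remains to bound the divergence.

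The key computation is $\KL(\P_\nu, \P_{\nu'})$. By the chain rule for KL divergence of the observed process (or simply because on the full canonical space the two measures differ coordinatewise), the divergence decomposes as $\sum$ over rounds of the per-round KL contribution, which is nonzero only when the pulled arm has a different mean under $\nu$ and $\nu'$ — i.e. every round contributes, with each pull of an arm costing $\KL(\cN(0,1),\cN(-\Delta,1)) = \Delta^2/2$. More carefully, using that the policy is the same function of the rewards under both measures, $\KL(\P_\nu,\P_{\nu'}) = \E_\nu\bigl[T_1(n+1)\bigr]\cdot\tfrac{\Delta^2}{2} + \E_\nu\bigl[T_2(n+1)\bigr]\cdot\tfrac{\Delta^2}{2} = \tfrac{n\Delta^2}{2}$. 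That is too large to be useful directly, so instead I would choose the time horizon cleverly, or rather choose the test threshold and apply Markov at a point adapted to $n$; but since the theorem must hold for every $n\ge1$, the right move is different: restrict the number of effective rounds. The cleanest fix is to take $n$ arbitrary but bound $\KL$ by the ``interleaving'' argument with a stopping time — actually the standard trick here is that the statement is trivial for $\Delta \ge \tfrac{1}{\sqrt{n}}$ is false; rather, for large $n$ one instead notes $\KL = n\Delta^2/2$ and one should compare regrets at a truncated horizon. The honest resolution is: if $R_n(\nu)\ge \tfrac{1}{4\Delta}$ or $R_n(\nu')\ge\tfrac{1}{4\Delta}$ we are done, so assume both are $< \tfrac{1}{4\Delta}$; then $\E_\nu T_2(n+1) < \tfrac{1}{4\Delta^2}$ and $\E_\nu T_1(n+1) = n - \E_\nu T_2(n+1)$, and here lies the main obstacle — controlling $\KL$ in terms of the \emph{small} quantity $\E_\nu T_2(n+1)$ rather than $n$. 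This is done by the data-processing / Wald-type identity: $\KL(\P_\nu,\P_{\nu'})$ restricted to the observations equals $\tfrac{\Delta^2}{2}\bigl(\E_\nu T_1(n+1) + \E_\nu T_2(n+1)\bigr)$, which does not shrink, so one must instead not observe arm~$1$'s rewards after it is ``confirmed'' — but both arms are genuinely observed. The actual argument, which I would carry out, is to apply Lemma~\ref{lem:1} not to the full horizon but to observe: under $\nu'$ a good policy pulls arm~$1$ rarely, so $\E_{\nu'}T_1(n+1)$ is small, and then $\KL(\P_{\nu'},\P_\nu) = \tfrac{\Delta^2}{2}\bigl(\E_{\nu'}T_1(n+1)+\E_{\nu'}T_2(n+1)\bigr)$ is still $\approx n\Delta^2/2$. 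Thus the genuine obstacle is that naive KL is linear in $n$; the resolution (and the step I expect to be hardest) is to use the symmetric/relative form: take $\psi = \dsone\{S \le n/2\}$, and bound $\KL$ using only the rounds up to the random time, invoking convexity of KL along the trajectory so that $\KL \le \tfrac{\Delta^2}{2}\min\bigl(\E_\nu[T_1(n{+}1)+T_2(n{+}1)],\,\text{something smaller}\bigr)$ — concretely, one shows $\KL(\P_\nu,\P_{\nu'}) \le \tfrac{\Delta^2}{2}\bigl(\E_\nu T_2(n+1) + \E_{\nu'} T_1(n+1)\bigr)$ is \emph{not} valid, but $\tfrac12 e^{-\KL}$ combined with both Markov bounds and the elementary inequality $a e^{-b} \le$ (solving the resulting transcendental relation $x \ge \tfrac{1}{8}e^{-x/2}$ type bound) yields $\max(R_n(\nu),R_n(\nu')) \ge \tfrac{1}{4\Delta}$ after optimizing. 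In short: set $x = \tfrac{4}{n\Delta}\max(R_n(\nu),R_n(\nu'))$, get $x \ge \tfrac12 e^{-n\Delta^2/2}\cdot$(correction), and check that either the regret already exceeds $\tfrac{1}{4\Delta}$ or a contradiction arises; the hard part is packaging the KL bound so the $n$-dependence cancels, which is exactly where the stopping-time / interleaving refinement of Lemma~\ref{lem:1} is needed.
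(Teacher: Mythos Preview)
Your proposal has a genuine gap --- precisely the one you identify but never close. Applying Lemma~\ref{lem:1} once, with the single test $\psi=\dsone\{T_2(n{+}1)\le n/2\}$ based on all $n$ rounds, forces you to confront the KL divergence of the full observation process, which is exactly $n\Delta^2/2$ (in the bandit setting every pull of either arm contributes $\KL(\cN(0,1),\cN(-\Delta,1))=\Delta^2/2$, and there are $n$ pulls). No choice of $\psi$ can do better than Lemma~\ref{lem:1} already does, and none of the fixes you sketch --- stopping times, assuming both regrets are small, switching the direction of the KL --- can make this quantity depend on $\E_\nu T_2$ rather than on $n$, because $T_1+T_2=n$ deterministically. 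So the single-test route with Markov at the end yields at best $\max(R_n(\nu),R_n(\nu'))\gtrsim n\Delta\,e^{-n\Delta^2/2}$, which is useless for large $n$.

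The paper's argument is structurally different and much simpler. Rather than one test at time $n$, it applies Lemma~\ref{lem:1} at \emph{each} round $t$ with $\psi_t=\dsone\{I_t=1\}$, obtaining
\[
\P_\nu(I_t=2)+\P_{\nu'}(I_t=1)\;\ge\;\tfrac12\exp\bigl(-\KL_t\bigr),
\]
where $\KL_t$ is the divergence between the laws of the observations available when choosing $I_t$. The paper first passes to the full-information setting (a harder problem for the lower bound), so that $\KL_t=t\Delta^2$ is deterministic; in the bandit setting one gets $\KL_t=(t-1)\Delta^2/2$ for the same reason as above, so the passage is a convenience, not a necessity. Summing over $t$ and using $R_n(\nu)+R_n(\nu')=\Delta\sum_{t=1}^n[\P_\nu(I_t=2)+\P_{\nu'}(I_t=1)]$ gives
\[
\max\bigl(R_n(\nu),R_n(\nu')\bigr)\;\ge\;\frac{\Delta}{4}\sum_{t=1}^n e^{-t\Delta^2},
\]
and the geometric sum is of order $1/\Delta^2$. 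The idea you were missing is exactly this: many per-round inequalities with $\KL_t$ of size $t\Delta^2$ sum to something of order $1/\Delta^2$, whereas a single inequality with $\KL$ of size $n\Delta^2$ is worthless.
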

\begin{proof}
In this proof we assume that the policy has access to $t$ rewards from each arm at time step $t$. Clearly this full information setting is simpler than the bandit setting, and thus a lower bound for the former implies one for the latter. Using Lemma \ref{lem:1} as well as straightforward computations one obtains
\begin{align*}
&\max\left(R_n(\nu), R_n(\nu')\right)  \geq  \frac{1}{2} \left( R_n(\nu) + R_n(\nu') \right)  = \frac{\Delta}{2} \sum_{t=1}^n \left(\P_{\nu}(I_t = 2) + \P_{\nu'}(I_t = 1) \right) \\
&\geq  \frac{\Delta}{4} \sum_{t=1}^n \exp(- \KL(\nu^{\otimes t}, \nu'^{\otimes t}) ) =  \frac{\Delta}{4} \sum_{t=1}^n \exp(- t \Delta^2) \ge \frac{1}{4\Delta}\,.
\end{align*}
\end{proof}
The above theorem ensures that the regret bound of Theorem~\ref{th:alg1} has the correct dependence in $\Delta$. 
This is quite surprising as the original bound of~\cite{LR85} indicates that without the knowledge of $\mu^{(\star)}$ and $\Delta$, one can incur a regret that diverges to infinity at a logarithmic rate. The next result shows that this logarithmic regret already appears when one does not know the value of $\mu^{(\star)}$. Thus the knowledge of $\Delta$ without the knowledge of $\mu^{(\star)}$ is not sufficient to obtain a bounded regret. Moreover, the following lower bound matches the upper bounds (for the two-armed case) of~\cite{AB09}, \cite{AueOrt10} and~\cite{PerRig11}, thus proving their optimality.
\begin{theorem} \label{th:LB2}
Let $\nu= \delta_0 \otimes \cN(- \Delta,1)$ and $\nu' = \delta_0 \otimes \cN(\Delta,1)$. Then for any policy, and any $n \geq 1$,
$$\max\left(R_n(\nu), R_n(\nu')\right) \geq \frac{\log(n \Delta^2 / 2)}{4 \Delta} .$$
\end{theorem}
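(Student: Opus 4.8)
The plan is to reduce the bandit problem to a hypothesis test between the two environments $\nu = \delta_0 \otimes \cN(-\Delta,1)$ and $\nu' = \delta_0 \otimes \cN(\Delta,1)$. In $\nu$, arm $1$ is optimal and pulling arm $2$ costs $\Delta$ per round; in $\nu'$, arm $2$ is optimal and pulling arm $1$ costs $\Delta$ per round. Hence
\begin{equation*}
R_n(\nu) = \Delta\, \E_\nu T_2(n+1), \qquad R_n(\nu') = \Delta\, \E_{\nu'} T_1(n+1) = \Delta\bigl(n - \E_{\nu'} T_2(n+1)\bigr).
\end{equation*}
As in the proof of Theorem~\ref{th:LB1}, averaging the two regrets and applying Lemma~\ref{lem:1} with $\psi$ the indicator of the event $\{I_t = 2\}$ (more precisely, summing over $t$) yields a lower bound of the form $\frac{\Delta}{4}\sum_{t=1}^n \exp(-\KL(\cdot,\cdot))$, where the KL divergence is between the laws of the observations available up to time $t$ under $\nu$ and $\nu'$.

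The key difference with Theorem~\ref{th:LB1} is that here the information gathered about arm $2$ is not $t$ samples but only $T_2(t)$ samples, and $T_2(t)$ is itself random and policy-dependent. Since arm $1$ has the deterministic distribution $\delta_0$ under both $\nu$ and $\nu'$, observations from arm $1$ carry no discriminating information, so by the chain rule the KL divergence between the two observation processes up to time $t$ equals $\E_\nu[T_2(t)] \cdot \KL(\cN(-\Delta,1),\cN(\Delta,1)) = \E_\nu[T_2(t)]\cdot 2\Delta^2$ (using Wald-type identity / chain rule for KL with a stopping-time-like number of samples). Then by Jensen's inequality applied to the convex function $\exp(-2\Delta^2 x)$,
\begin{equation*}
\max(R_n(\nu),R_n(\nu')) \ge \frac{\Delta}{4}\sum_{t=1}^n \exp\bigl(-2\Delta^2\, \E_\nu T_2(t)\bigr) \ge \frac{\Delta}{4}\, n \exp\Bigl(-\tfrac{2\Delta^2}{n}\sum_{t=1}^n \E_\nu T_2(t)\Bigr),
\end{equation*}
and since $\sum_{t=1}^n \E_\nu T_2(t) \le n\, \E_\nu T_2(n+1) = n R_n(\nu)/\Delta \le n R_n^{\star}/\Delta$ where $R_n^\star := \max(R_n(\nu),R_n(\nu'))$, we obtain the self-referential inequality $R_n^\star \ge \frac{\Delta n}{4}\exp(-2\Delta R_n^\star)$.

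The final step is to solve this implicit inequality for $R_n^\star$. Taking logarithms gives $\log(4 R_n^\star/(\Delta n)) \ge -2\Delta R_n^\star$; one then argues that if $R_n^\star < \frac{\log(n\Delta^2/2)}{4\Delta}$ the inequality is violated (plug this value in and check that $\frac{\Delta n}{4}\exp(-2\Delta R_n^\star)$ exceeds $\frac{\log(n\Delta^2/2)}{4\Delta}$, using $e^{-2\Delta R_n^\star} > e^{-\log(n\Delta^2/2)/2} = \sqrt{2/(n\Delta^2)}$), hence $R_n^\star \ge \frac{\log(n\Delta^2/2)}{4\Delta}$. The main obstacle I anticipate is making the chain-rule computation of $\KL$ fully rigorous: since $T_2(t)$ is a random number of samples from arm $2$, one must set up the divergence between the two processes on the canonical space of the policy's observations and invoke the divergence decomposition lemma carefully (this is where a genuine stochastic-process argument replaces the trivial $\KL(\nu^{\otimes t},\nu'^{\otimes t}) = t\Delta^2$ of Theorem~\ref{th:LB1}); everything after that is elementary convexity and solving a scalar inequality.
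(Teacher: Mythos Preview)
Your proposal is correct and follows essentially the same route as the paper: both reduce to $\max(R_n(\nu),R_n(\nu'))\ge \frac{\Delta}{4}\sum_t \exp(-2\Delta^2\,\E_\nu T_2(t))$ via Lemma~\ref{lem:1} and the divergence identity $\KL(\nu_t,\nu'_t)=2\Delta^2\,\E_\nu T_2(t)$, then balance this against $R_n(\nu)=\Delta\,\E_\nu T_2(n+1)$. The only cosmetic difference is that the paper skips your Jensen step (using instead the cruder monotonicity bound $\E_\nu T_2(t)\le \E_\nu T_2(n)$) and then directly minimizes $\frac{\Delta}{2}\bigl(x+\frac{n}{4}e^{-2\Delta^2 x}\bigr)$ over $x=\E_\nu T_2(n)$, which is slightly cleaner than solving your self-referential inequality but yields the same bound.
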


\begin{proof}
First note that 
$$\max\left(R_n(\nu), R_n(\nu')\right) \geq R_n(\nu) \ge \Delta \E_{\nu} T_2(n) .$$
Furthermore, denoting by $\nu_t$ (respectively $\nu'_t$) the law of the observed rewards up to time $t$ under $\nu$ (respectively under $\nu'$), and following the same computations than in the previous proof, one also obtains 
$$\max\left(R_n(\nu), R_n(\nu')\right) \geq \frac{\Delta}{4} \sum_{t=1}^n \exp(- \KL(\nu_t, \nu'_t) ).$$
Since under $\nu$, arm 1 is uninformative, it follows from basic calculation that
$$
\KL(\nu_t, \nu'_t) = 2 \Delta^2 \E_{\nu} T_2(t)\,.
$$
The above three displays yield
\begin{align*}
\max\left(R_n(\nu), R_n(\nu')\right) & \geq  \frac{\Delta}{2} \left(\E_{\nu} T_2(n) + \frac{n}{4} \exp(- 2 \Delta^2 \E_{\nu} T_2(n) )  \right) \\
& \geq  \min_{x \in [0,n]} \frac{\Delta}{2} \left(x + \frac{n}{4} \exp(- 2 \Delta^2 x ) \right)\\
& \geq  \frac{\log(n \Delta^2 / 2)}{4 \Delta} .
\end{align*}
\end{proof}

Finally we prove that the knowledge of $\mu^{(\star)}$ without the knowledge of $\Delta$ is not sufficient either to obtain a bounded rescaled regret $\Delta R_n$. This result is more difficult, and falls within the more general topic of lower bounds for adaptive rates. First we need to generalize Lemma \ref{lem:1} to deal with both a composite alternative, and a rescaled risk. The proof of this result is standard and postponed to the appendix.

\begin{lemma} \label{lem:2}
Let $\rho_0$ and $\rho_{\Delta}, \Delta \in \R$ be probability distributions supported on some set $\cX$, with $\rho_{\Delta}$ absolutely continuous with respect to $\rho_0$. Let $\lambda$ be a finite positive measure on $\R$. Then for any measurable function $\psi : \cX \rightarrow \{0,1\}$, one has
$$\P_{X \sim \rho_0}(\psi(X) = 1) + \int \Delta \P_{X \sim \rho_{\Delta}}(\psi(X) = 0) d\lambda(\Delta) \geq \frac1{C_\lambda} \exp\left(- \KL\left(\rho_0,\bar \rho\right) \right) \,,
$$
where $\bar \rho$ is the positive measure on $\cX$ defined by $\bar \rho= \int \Delta \rho_{\Delta} d\lambda(\Delta)$ and $C_\lambda =1+ \int \Delta d\lambda(\Delta)$.
\end{lemma}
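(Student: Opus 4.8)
The plan is to mimic the proof of Lemma~\ref{lem:1} (the standard Le Cam / Neyman--Pearson two-point argument) but carry an extra weight $\Delta$ and an extra integration against $\lambda$ through the computation. The key identity underpinning everything is that for any fixed $\Delta$ and any test $\psi$,
\begin{equation*}
\P_{X \sim \rho_0}(\psi(X)=1) + \P_{X\sim\rho_\Delta}(\psi(X)=0) \ge \int_{\cX} \min\Big(\frac{d\rho_0}{d\rho_\Delta},1\Big)\, d\rho_\Delta = \int_{\cX} \min\big(d\rho_0, d\rho_\Delta\big)\,,
\end{equation*}
the total variation lower bound on testing error. So first I would record this pointwise-in-$\Delta$ inequality, multiply both sides by $\Delta$, and integrate $d\lambda(\Delta)$; on the right this produces $\int_\cX \int_\R \min(d\rho_0,d\rho_\Delta)\,\Delta\, d\lambda(\Delta)$, and by Tonelli (everything nonnegative) we may swap the order of integration.

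The second step is to pass from this minimum-of-two-measures expression to the claimed exponential-of-KL bound involving $\bar\rho=\int\Delta\rho_\Delta\,d\lambda(\Delta)$. Write $\mu_\Delta:=\Delta\rho_\Delta$ so that $\bar\rho=\int\mu_\Delta\,d\lambda(\Delta)$ and the left-hand side, after dropping the $\psi(X)=1$ term's contribution appropriately, is at least $\int_\cX \int_\R \min(\Delta\,d\rho_0,\, d\mu_\Delta)\,d\lambda(\Delta)$. Since for nonnegative $a,b$ one has $\min(a,b)\ge \min(a,\cdot)$ is concave, I would use $\int_\R \min(\Delta g_0,\, g_\Delta)\,d\lambda(\Delta)$ where $g_0=d\rho_0/d\nu$, $g_\Delta=d\mu_\Delta/d\nu$ for a dominating $\nu$; bounding $\min(\Delta g_0,g_\Delta)\ge$ something comparable to $C_\lambda^{-1}\min\big((\int\Delta\,d\lambda)g_0,\ \bar g\big)$ is the delicate point. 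Concretely, the cleanest route is: $\int \min(\Delta g_0, g_\Delta)\,d\lambda(\Delta)\ge \min\!\big(g_0\int\Delta\,d\lambda,\ \int g_\Delta\,d\lambda\big)/1$ fails in general, so instead bound each $\min(\Delta g_0,g_\Delta)\ge \frac{\Delta g_0 \cdot g_\Delta}{\Delta g_0 + g_\Delta}$ is also awkward; the right tool is simply $\min(a,b)\ge ab/(a+b)\cdot$— I will instead follow Tsybakov's scheme directly: use $\min(a,b)\ge \tfrac12\min(a+b, \ldots)$ is not it either. The correct and standard manoeuvre is to note $\P_{\rho_0}(\psi=1)+\int\Delta\P_{\rho_\Delta}(\psi=0)\,d\lambda \ge \int_\cX\big(g_0\wedge \bar g/C_\lambda'\big)$ after first replacing $\P_{\rho_0}(\psi=1)$ by $\tfrac{1}{C_\lambda}\int\Delta\,d\lambda\cdot\P_{\rho_0}(\psi=1)\cdot(\text{something})$; then apply $\int (g_0\wedge \bar g)\ge \tfrac12\exp(-\KL(\rho_0,\bar\rho))$, which is exactly the measure-theoretic inequality $\int \min(dP,dQ)\ge \tfrac12 e^{-\KL(P,Q)}$ valid for probability $P=\rho_0$ and positive measure $Q=\bar\rho$ (the proof: $\int\min(dP,dQ)\ge (\int \sqrt{dP\,dQ})^2/\int(dP+dQ-2\min)\ldots$ — more simply, by Jensen on $t\mapsto e^{-t}$, $\int \min(1,\tfrac{dQ}{dP})dP \ge \exp(-\int \log_+\tfrac{dP}{dQ}\,dP)\ge$ a constant times $e^{-\KL}$, up to absorbing the total mass of $\bar\rho$, which is $\int\Delta\,d\lambda \le C_\lambda$, into the constant).

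The main obstacle, and the step I would spend the most care on, is exactly the normalization bookkeeping: $\bar\rho$ is a positive measure of total mass $\int\Delta\,d\lambda(\Delta)$, not a probability measure, so the usual ``$\int\min(dP,dQ)\ge\tfrac12 e^{-\KL(P,Q)}$'' must be applied with $P=\rho_0$ a probability and $Q=\bar\rho$ subprobability/superprobability, and the constant $C_\lambda=1+\int\Delta\,d\lambda$ is precisely what appears when one combines the ``1'' from the $\P_{\rho_0}(\psi=1)$ term with the ``$\int\Delta\,d\lambda$'' mass of the weighted alternative in the denominator (this is the analogue of the ``$\tfrac12$'' in Lemma~\ref{lem:1}, where $C_\lambda=1+1=2$ when $\lambda$ is a Dirac at $1$). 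So I would: (i) establish $\P_{\rho_0}(\psi=1)+\int\Delta\P_{\rho_\Delta}(\psi=0)d\lambda \ge \int_\cX \min\big(d\rho_0,\ d\bar\rho\big)$ by the pointwise TV bound plus Tonelli plus $\min(\Delta a, b)\ge$ pointwise manipulations, then (ii) show $\int_\cX \min(d\rho_0, d\bar\rho) \ge \frac{1}{C_\lambda}\exp(-\KL(\rho_0,\bar\rho))$ by writing the left side as $\E_{\rho_0}\min(1, d\bar\rho/d\rho_0)$, applying Jensen's inequality to the convex function $x\mapsto e^{-x}$ after the substitution $\min(1,e^{-y})\ge e^{-y}$ is too lossy, so instead use the sharper $\int\min(dP,dQ)\ge \frac{(\int dP\,dQ\text{-type})}{\cdot}$; cleanest is the bound from \cite[Chapter~2]{Tsy09}, $\int\min(dP,dQ)\ge\frac12 e^{-\KL(P,Q)}$ for probabilities, applied to $\rho_0$ and the renormalized $\bar\rho/(\int\Delta d\lambda)$, then tracking the renormalization constant yields $C_\lambda$. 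Then the final inequality reads as stated, completing the proof.
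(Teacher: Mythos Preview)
Your plan has a genuine gap at the very first step. You propose to start from the pointwise-in-$\Delta$ testing bound
\[
\P_{\rho_0}(\psi=1)+\P_{\rho_\Delta}(\psi=0)\ \ge\ \int_{\cX}\min(d\rho_0,d\rho_\Delta),
\]
multiply by $\Delta$, and integrate against $\lambda$. But this produces $\bigl(\int\Delta\,d\lambda\bigr)\,\P_{\rho_0}(\psi=1)$ on the left, not $\P_{\rho_0}(\psi=1)$, and on the right it produces $\int_{\cX}\int_{\R}\Delta\min(d\rho_0,d\rho_\Delta)\,d\lambda(\Delta)$, which is \emph{not} in general comparable from below to $\int_{\cX}\min(d\rho_0,d\bar\rho)$. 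Indeed, since $b\mapsto\min(a,b)$ is concave, Jensen goes the wrong way: one can construct $g_0,g_\Delta$ with $\int_{\R}\Delta\min(g_0,g_\Delta)\,d\lambda<\min\bigl(g_0,\int_{\R}\Delta g_\Delta\,d\lambda\bigr)$ pointwise in $x$. So the ``pointwise TV bound plus Tonelli plus $\min(\Delta a,b)\ge\ldots$'' route you invoke for step~(i) does not go through; the repeated dead ends in your write-up reflect this.

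The fix is much simpler and is what the paper does: apply Fubini \emph{before} any minimization, so that
\[
\int\Delta\,\P_{\rho_\Delta}(\psi=0)\,d\lambda(\Delta)
=\int_{\{\psi=0\}}\!\Bigl(\int\Delta\,d\rho_\Delta\,d\lambda(\Delta)\Bigr)
=\int_{\{\psi=0\}}d\bar\rho,
\]
and hence the left-hand side of the lemma equals $\int_{\{\psi=1\}}d\rho_0+\int_{\{\psi=0\}}d\bar\rho$. This is a single Neyman--Pearson comparison between $\rho_0$ and the fixed positive measure $\bar\rho$, minimized at $\psi=\dsone\{d\bar\rho/d\rho_0>1\}$, giving the clean lower bound $\int_{\cX}\min(d\rho_0,d\bar\rho)$ with no $\Delta$-by-$\Delta$ reasoning.

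Your step~(ii) is also off. Renormalizing $\bar\rho$ to a probability and quoting the standard $\int\min(dP,dQ)\ge\tfrac12 e^{-\KL(P,Q)}$ yields, after tracking the mass $m:=\int\Delta\,d\lambda$, a constant $\tfrac{1}{2\max(1,m)}$; when $m>1$ this is strictly smaller than the claimed $\tfrac{1}{1+m}=\tfrac{1}{C_\lambda}$, so you do not recover the lemma as stated. The paper instead works directly with the positive measure $\bar\rho$: by Cauchy--Schwarz,
\[
\Bigl(\int\sqrt{d\rho_0\,d\bar\rho}\Bigr)^{2}
\le\Bigl(\int\min(d\rho_0,d\bar\rho)\Bigr)\Bigl(\int\max(d\rho_0,d\bar\rho)\Bigr)
\le C_\lambda\int\min(d\rho_0,d\bar\rho),
\]
since $\int\min+\int\max=\rho_0(\cX)+\bar\rho(\cX)=1+m=C_\lambda$; and by Jensen applied to the concave logarithm,
\[
\Bigl(\int\sqrt{d\rho_0\,d\bar\rho}\Bigr)^{2}
=\exp\Bigl(2\log\!\int\sqrt{\tfrac{d\bar\rho}{d\rho_0}}\,d\rho_0\Bigr)
\ge\exp\Bigl(\int\log\tfrac{d\bar\rho}{d\rho_0}\,d\rho_0\Bigr)
=\exp\bigl(-\KL(\rho_0,\bar\rho)\bigr).
\]
Combining these two displays gives exactly $\tfrac{1}{C_\lambda}e^{-\KL(\rho_0,\bar\rho)}$.
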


Note that $\int \Delta \rho_{\Delta} \lambda(\Delta)$ is not a probability distribution, 
however it is a positive measure thus the Kullback-Leibler divergence in the above lemma is well-defined.

\begin{theorem} \label{th:LB3}
Let $\nu_0 = \cN(0,1) \otimes \cN(-1,1)$, and $\nu_{\Delta} = \cN(-\Delta,1) \otimes \cN(0,1)$, $\Delta \in (0,1]$. Then for any policy, and any $n \geq 1$,
$$\max\left(R_n(\nu_0), \sup_{\Delta \in (0,1]} \Delta R_n(\nu_{\Delta})\right) \geq \frac{1}{2} \log(n / 139) .$$
\end{theorem}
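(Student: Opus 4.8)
The plan is to follow the same hypothesis-testing strategy as in Theorems~\ref{th:LB1} and~\ref{th:LB2}, but now using the composite-alternative version Lemma~\ref{lem:2} with a carefully chosen weighting measure $\lambda$ on $\Delta \in (0,1]$. As in the previous proofs, I would first reduce to the full-information setting (the policy sees $t$ i.i.d.\ rewards from each arm at time $t$), which only strengthens the lower bound for the bandit case. Under $\nu_0$ the optimal arm is arm~$1$, so $R_n(\nu_0) \ge \sum_{t=1}^n \P_{\nu_0}(I_t = 2)$; under $\nu_\Delta$ the optimal arm is arm~$2$, so $\Delta R_n(\nu_\Delta) \ge \Delta\sum_{t=1}^n \P_{\nu_\Delta}(I_t = 1)$. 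Averaging the first against the supremum of the second over $\Delta$, and bounding the supremum below by the $\lambda$-average, I would apply Lemma~\ref{lem:2} arm-by-arm at each time $t$ with $\rho_0$ the law of $t$ rewards from each arm under $\nu_0$, $\rho_\Delta$ the law under $\nu_\Delta$, and $\psi$ the indicator $\{I_t = 1\}$. This produces a bound of the shape
\begin{equation*}
\max\left(R_n(\nu_0), \sup_{\Delta \in (0,1]} \Delta R_n(\nu_\Delta)\right) \ge \frac{1}{C_\lambda}\sum_{t=1}^n \exp\left(-\KL\left(\rho_0^{(t)}, \bar\rho^{(t)}\right)\right),
\end{equation*}
where $\bar\rho^{(t)} = \int \Delta \, \rho_\Delta^{(t)} \, d\lambda(\Delta)$ and $C_\lambda = 1 + \int \Delta \, d\lambda(\Delta)$.

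The crux is then to estimate $\KL(\rho_0^{(t)}, \bar\rho^{(t)})$. Here $\rho_0^{(t)}$ is a product of $t$ copies of $\cN(0,1)$ (arm~1) and $t$ copies of $\cN(-1,1)$ (arm~2), while $\rho_\Delta^{(t)}$ replaces these by $\cN(-\Delta,1)^{\otimes t}$ and $\cN(0,1)^{\otimes t}$ respectively. The key structural feature to exploit is that the arm-2 marginals of $\rho_0^{(t)}$ and $\rho_\Delta^{(t)}$ differ only by a location shift of size $\Delta \le 1$, so the arm-2 likelihood ratio is uniformly controlled; the essential variation with $\Delta$ lives in the arm-1 coordinates. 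I would write $\KL(\rho_0^{(t)}, \bar\rho^{(t)}) = \E_{\rho_0^{(t)}}\log\big(d\rho_0^{(t)}/d\bar\rho^{(t)}\big)$ and lower-bound $d\bar\rho^{(t)}/d\rho_0^{(t)}$ by restricting the integral over $\lambda$ to a well-chosen subinterval of $\Delta$'s near the empirical mean of the arm-1 samples — effectively a Laplace/Gaussian-integral computation, since the arm-1 sample mean $\wh\mu^{(1)}_t$ concentrates at $0$ under $\rho_0^{(t)}$ with fluctuations of order $1/\sqrt t$, and the integrand $\prod e^{\Delta Y^{(1)}_\ell - \Delta^2/2}$ as a function of $\Delta$ is a Gaussian bump of width $\sim 1/\sqrt t$. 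Choosing $\lambda$ to be, say, Lebesgue measure on $(0,1]$ (so $C_\lambda = 3/2$), the $\lambda$-integral of this bump contributes a factor of order $1/\sqrt t$, and together with the bounded arm-2 contribution one should arrive at $\KL(\rho_0^{(t)}, \bar\rho^{(t)}) \le \frac12 \log t + O(1)$, whence $\exp(-\KL) \gtrsim 1/(c\sqrt t)$ and $\sum_{t=1}^n 1/\sqrt t \gtrsim \sqrt n$ — but one actually wants a $\log n$, not $\sqrt n$, so the weighting $\lambda$ must instead be chosen with a $1/\Delta$-type density (truncated near $0$) so that the per-$t$ term decays like $1/t$ rather than $1/\sqrt t$, giving $\sum 1/t \sim \log n$; pinning down the precise $\lambda$ that yields the stated constant $139$ is a matter of bookkeeping.

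The main obstacle I anticipate is exactly this estimate of $\KL(\rho_0^{(t)}, \bar\rho^{(t)})$ for the mixture measure $\bar\rho^{(t)}$: unlike the single-hypothesis KL computations in Theorems~\ref{th:LB1}--\ref{th:LB2}, which were one-line Gaussian facts, here one must control a logarithm of an integral (a mixture), which does not tensorize and requires a genuine argument — either a convexity/Jensen manipulation, or the Laplace-type lower bound on $d\bar\rho^{(t)}/d\rho_0^{(t)}$ sketched above, carried out uniformly enough in $t$ to sum the resulting series. A secondary subtlety is the correct choice of the weighting measure $\lambda$: it must put enough mass on small $\Delta$ (to make $\sup_\Delta \Delta R_n(\nu_\Delta)$ genuinely hard) while keeping $C_\lambda$ and the KL terms under control, and the comment after Lemma~\ref{lem:2} (that $\bar\rho$ need only be a positive measure) is what gives the flexibility to do this. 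Once the per-time bound $\exp(-\KL(\rho_0^{(t)}, \bar\rho^{(t)})) \ge \frac{c}{t}$ is in hand, summing over $t = 1,\dots,n$ and collecting the constants into the claimed $\tfrac12\log(n/139)$ is routine.
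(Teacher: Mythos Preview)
Your reduction to full information is fatal here. Under full information at time $t$ you see $t$ samples from arm~2, whose mean is $-1$ under $\nu_0$ and $0$ under every $\nu_\Delta$ --- a fixed shift of size $1$, not $\Delta$ as you wrote. Hence the arm-2 contribution to $\KL(\rho_0^{(t)},\rho_\Delta^{(t)})$ is exactly $t/2$ for every $\Delta$, and since this factor is common to all $\Delta$ it pulls straight out of the mixture: $\KL(\rho_0^{(t)},\bar\rho^{(t)}) \ge t/2 - O(1)$ regardless of $\lambda$. Then $\exp(-\KL)$ decays exponentially in $t$, the sum over $t$ is $O(1)$, and you cannot reach $\log n$. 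The full-information reduction, which was harmless in Theorem~\ref{th:LB1}, destroys the argument here.

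The paper stays in the bandit model precisely to avoid this: the arm-2 KL contribution then becomes $\tfrac12\,\E_{\nu_0}T_2(t)$, which is small \emph{if} the policy is good under $\nu_0$. This quantity is unknown, so the paper does not aim for a clean per-$t$ bound $\exp(-\KL)\gtrsim c/t$; instead it bounds $\KL(\nu_{0,t},\bar\rho^{(t)}) \le \tfrac12\,\E_{\nu_0}T_2(n) + \tfrac12\log(16n)$ uniformly in $t$, obtaining $\max(\cdots)\gtrsim \sqrt{n}\,e^{-\frac12\E_{\nu_0}T_2(n)}$, and then plays this off against the trivial bound $R_n(\nu_0)\ge \E_{\nu_0}T_2(n)$ exactly as in Theorem~\ref{th:LB2}. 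The log-of-integral is handled not by a Laplace localization but by Jensen, using that $x\mapsto\log\int\Delta e^{\phi(\Delta)x}d\lambda(\Delta)$ is convex; and $\lambda$ is taken uniform on $[0,1/\sqrt{\E_{\nu_0}T_1}]$ (depending on the policy, not on $t$), so that the remaining arm-1 integral is a constant.
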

Theorem~\ref{th:LB3} can be read as follows: for any policy, and any $n \geq 1$, there exists $\Delta \in (0,1]$ and a problem instance with gap $\Delta$ and optimal value $\mu^{(\star)} =0$ such that on this problem one has $$R_n \geq \frac{\log(n/139)}{2\Delta}.$$
\begin{proof}
Similarly to the previous proof we define $\nu_{0,t}$ and $\nu_{\Delta, t}$ as the law of the observed rewards up to time $t$.  
 Lemma~\ref{lem:2} yields
\begin{equation} \label{eq:LB3-1}
\max\left(R_n(\nu_0), \sup_{\Delta \in (0,1]} \Delta R_n(\nu_{\Delta})\right) \geq \frac{1}{2C_\lambda} \sum_{t=1}^n \exp\left(- \KL\left(\nu_{0,t}, \int \Delta \nu_{\Delta,t} d\lambda(\Delta) \right) \right).
\end{equation}
For  $\nu \in \{\nu_0, \nu_{\Delta}\}$, define the average rewards for arm $i \in \{1,2\}$ by $\mu_\nu\bi$. Therefore, $\mu_{\nu_0}^{(1)}=\mu_{\nu_\Delta}^{(2)}=0$, $\mu_{\nu_0}^{(2)}=-1$ and $\mu_{\nu_\Delta}^{(1)}=-\Delta$. Recall that a policy $\{I_t\}_{t\ge 1}$ taking values in $\{1,2\}$ generates a sequence of rewards $Y\bpi_t, t\ge 1$ distributed according to $\nu \in \{\nu_0, \nu_\Delta\}$. The joint density (with respect to the Lebesgue measure) $d\nu_t$ of $(Y\bpi_1,\ldots, Y\bpi_t) \in \R^t$, where $\nu \in \{\nu_{\Delta}, \nu_0\}$ can be computed easily using the chain rule for conditional densities. It is given by
$$
d\nu_t=\frac{1}{(2\pi)^{t/2}}\exp\Big(-\frac{1}{2}\sum_{\ell=1}^t(Y^{(I_\ell)}_\ell- \mu_\nu^{(I_\ell)})^2\Big)\,.
$$
Choosing $\nu=\nu_\Delta$ and $\nu=\nu_0$ respectively, it yields
\begin{align*}
&\frac{d \nu_{\Delta,t}}{d \nu_{0,t}}(Y^{(I_1)}_1, \ldots, Y^{(I_t)}_t)=\exp\Big(-\frac{1}{2}\sum_{\ell=1}^t\big[(Y^{(I_\ell)}_\ell- \mu_{\nu_\Delta}^{(I_\ell)})^2 -(Y^{(I_\ell)}_\ell- \mu_{\nu_0}^{(I_\ell)})^2 \big]\Big)\\
&=\exp\Big(-\frac{1}{2}\sum_{\substack{\ell=1\\I_\ell=1 }}^t\big[(Y^{(1)}_\ell+\Delta)^2 -(Y^{(1)}_\ell)^2 \big]-\frac{1}{2}\sum_{\substack{\ell=1\\I_\ell=2 }}^t\big[(Y^{(2)}_\ell)^2 -(Y^{(2)}_\ell+1)^2 \big]\Big)\\
&= \exp \left( - \frac{T^{(1)}}{2} (2 \Delta \hat{\mu}^{(1)} + \Delta^2) + \frac{T^{(2)}}{2} (2 \hat{\mu}^{(2)} + 1) \right)\,,
\end{align*}
where we denote for simplicity $$
T\bi=T_i(t+1)=\sum_{\ell=1}^t \dsone\{I_\ell=i\} \quad \textrm{and} \quad  \hat{\mu}\bi =\hat \mu_{T_i(t)}^{(i)}=\frac{1}{T\bi}\sum_{\substack{\ell=1\\I_\ell=i }}^tY\bi_\ell, \quad i \in \{1,2\}\,.
$$
Dropping the dependency in $(Y^{(I_1)}_1, \ldots, Y^{(I_t)}_t)$ from the notation, it yields
$$
\int \Delta \frac{d \nu_{\Delta,t}}{d \nu_{0,t}} d\lambda(\Delta) = \exp \left( \frac{T^{(2)}}{2} (2 \hat{\mu}^{(2)} + 1) \right) \int \Delta \exp \left( - \frac{T^{(1)}}{2} (2 \Delta \hat{\mu}^{(1)} + \Delta^2) \right) d\lambda(\Delta)\,,
$$
and thus
\begin{align*}
& \KL\left(\nu_{0,t}, \int  \Delta \nu_{\Delta,t} d\lambda(\Delta) \right) \\
& = - \E_{\nu_0} \left(\frac{T^{(2)}}{2} (2 \hat{\mu}^{(2)} + 1) + \log\left(\int  \Delta \exp \left( - \frac{T^{(1)}}{2} (2 \Delta \hat{\mu}^{(1)} + \Delta^2) \right) d\lambda(\Delta)\right) \right) \\
& = \frac12 \E_{\nu_0} T^{(2)} - \E_{\nu_0} \log\left(\int  \Delta \exp \left( - \frac{T^{(1)}}{2} (2 \Delta \hat{\mu}^{(1)} + \Delta^2) \right) d\lambda(\Delta)\right)
\end{align*}
where the last line follows standard computations. Next, it follows from the Cauchy-Schwarz inequality that the function
$$
x \mapsto \log \left( \int_{\Delta} \Delta \exp(\phi(\Delta) x) d\lambda(\Delta) \right)
$$
is convex for any function $\varphi$. Together with the Jensen inequality, it yields
\begin{align*}
\E_{\nu_0} \log&\left(\int \Delta \exp \left( - \frac{T^{(1)}}{2} (2 \Delta \hat{\mu}^{(1)} + \Delta^2) \right) d\lambda(\Delta)\right)\\
& \geq  \log\left(\int \Delta \exp \left( - \E_{\nu_0} \frac{T^{(1)}}{2} (2 \Delta \hat{\mu}^{(1)} + \Delta^2) \right) d\lambda(\Delta)\right) \\
& =  \log\left(\int  \Delta \exp \left( - \frac{\E_{\nu_0} T^{(1)}}{2} \Delta^2\right) d\lambda(\Delta)\right) 
\end{align*}
Define $\tau=\E_{\nu_0} T^{(1)}$ and  let $\lambda$ be the uniform distribution on $[0,1/\sqrt{\tau}]$.  Since $ue^{-u^2/2}\ge u/2$ for $0\le u \le 1$, it yields 
\begin{align*}
\int \Delta \exp \left( - \frac{\E_{\nu_0} T^{(1)}}{2} \Delta^2\right) d\lambda(\Delta) &=\frac{1}{\sqrt{\tau}}\int_0^1 u \exp(-u^2/2)du \ge  \frac{1}{4\sqrt{\tau}}\,,
\end{align*}

%
%
Thus we have proved that 
\begin{align*}
\KL\left(\nu_{0,t}, \int_{0}^1 \Delta \nu_{\Delta,t} d\Delta \right) & \leq  \frac{1}{2} \E_{\nu_0} T^{(2)} + \log(4\sqrt{\E_{\nu_0} T^{(1)}}) \\
& \leq  \frac{1}{2} \E_{\nu_0} T_2(n) + \frac12 \log(16n).
\end{align*}
Plugging this into \eqref{eq:LB3-1} one obtains 
\begin{align*}
\max\left(R_n(\nu_0), \sup_{\Delta \in (0,1]} \Delta R_n(\nu_{\Delta})\right) &\geq \frac{\sqrt{n}}{8C_\lambda} \exp\left(- \frac{1}{2} \E_{\nu_0} T_2(n) \right)\\
&\ge \frac{\sqrt{n}}{16} \exp\left(- \frac{1}{2} \E_{\nu_0} T_2(n) \right)\,,
\end{align*}
where we use the fact that $\tau \ge 1$, which implies $C_\lambda \le3/2 \le2$.
On the other hand one also has
$$
R_n(\nu_0) \geq \E_{\nu_0} T_2(n)\,
$$
Therefore
\begin{align*}
\max\left(R_n(\nu_0), \sup_{\Delta \in (0,1]} \Delta R_n(\nu_{\Delta})\right)& \geq \min_{x \in [0,n]} \frac{1}{2}\Big(x+\frac{\sqrt{n}}{16}\exp(-x/2)  \Big)\\
&=\frac{1}{2}\log(n/139)\,.
\end{align*}
\end{proof}

Theorem~\ref{th:LB2} and~\ref{th:LB3} have important consequences on the \emph{exploration-exploitation tradeoff} mentioned in the introduction.  Indeed, consider the full information case where at each round, the agent observes the reward of both arms. In this case, it is not hard to see that the policy that indicates to pull the arm with the best average reward has bounded regret of order $1/\Delta$. Therefore, the knowledge of $\Delta$ or $\mu^{(\star)}$ alone does not alleviate the price for exploration. However, when both are known, it vanishes (see Theorem~\ref{th:alg1}).

\medskip

{\bf Acknowledgments.} We are indebted to Alexander Goldenshluger for bringing the reference \cite{LR84book} to our attention.

%
%
%

\appendix
\section{Proof of Lemma 7}
Throughout the proof, Radon-Nikodym derivatives over $\cX$ are taken with respect to a common but unspecified reference measure. It does not enter our final result. It follows from Fubini's Theorem that
\begin{align*}
& \P_{X \sim \rho_0}(\psi(X) = 1) + \int \Delta \P_{X \sim \rho_{\Delta}}(\psi(X) = 0) d\lambda(\Delta)   \\
&=  \int_{\psi = 1} d \rho_0 + \int\left( \int_{\psi = 0} \Delta d \rho_{\Delta} \right) d\lambda(\Delta) \\
& = \int_{\psi = 1} d \rho_0 + \int_{\psi = 1} d\bar \rho \\
& = \int_{\psi = 0} d \rho_0 + \int_{\psi = 1} \frac{d\bar \rho }{d \rho_0}  d\rho_0
\end{align*}
Furthermore the last expression is clearly minimized for $\psi(x) = \dsone\left\{\frac{d\bar \rho}{d \rho_0}(x) > 1 \right\}$\,. It yields
\begin{align*}
  \int_{\psi = 1} d \rho_0 + \int_{\psi = 0} \frac{d\bar \rho }{d \rho_0}  d\rho_0& \geq \int_{\frac{d\bar \rho}{d \rho_0} > 1} d \rho_0 + \int_{\frac{d\bar \rho}{d \rho_0} \le 1} \frac{d\bar \rho}{d \rho_0}  d\rho_0(x)  \\
& = \int_{\frac{d\bar \rho}{d \rho_0}  > 1} d \rho_0 + \int_{\frac{d\bar \rho}{d \rho_0}\le 1 } d\bar{\rho} \\
& = \int \min\left( d \rho_0, d\bar{\rho} \right)\,.
\end{align*}
Note that the latter quantity is often referred to as \emph{Hellinger affinity} and does not depend on the reference measure on $\cX$; see, e.g., \cite{Tsy09}, Chapter 2.
Now using the Cauchy-Schwarz inequality and the fact that 
$$
\int \min\left( d \rho_0, d\bar{\rho} \right) + \int \max\left( d \rho_0, d\bar{\rho} \right) = C_\lambda\,,
$$ 
we get
\begin{align*}
\left( \int \sqrt{d\bar{\rho} d\rho_0}\right)^2 & =  \left( \int \sqrt{\min(d\bar{\rho},d\rho_0) \max(d\bar{\rho},d\rho_0)}\right)^2 \\
& \leq  \left( \int_x \min(d\bar{\rho},d\rho_0) \right) \left( \int_x \max(d\bar{\rho},d\rho_0) \right) \\
& \leq  C_\lambda \int_x \min(d\bar{\rho},d\rho_0) .
\end{align*}
The above three displays together yield
$$
\P_{X \sim \rho_0}(\psi(X) = 1) + \int_{\Delta} \Delta \P_{X \sim \rho_{\Delta}}(\psi(X) = 0) d\lambda(\Delta) \geq \frac{1}{C_\lambda} \left( \int \sqrt{d\bar{\rho} d\rho_0}\right)^2 .$$
To complete the proof, observe that the Jensen inequality yields 
\begin{align*}
\Big( \int \sqrt{d\bar{\rho} d\rho_0}\Big)^2 & =  \Big( \int \sqrt{\frac{d\bar{\rho}}{d\rho_0}} d\rho_0\Big)^2\\
& =  \exp\Big[2 \log \Big( \int \sqrt{\frac{d\bar{\rho}}{d\rho_0}} d\rho_0 \Big) \Big] \\
& \geq  \exp\Big[2 \int \log \Big( \sqrt{\frac{d\bar{\rho}}{d\rho_0}} \Big) d\rho_0 \Big]\\
& =  \exp[ - \KL(\rho_0, \bar{\rho}) ] .
\end{align*}

\newpage

\end{document}